\documentclass[12pt]{amsart}

\usepackage{fullpage} 
\usepackage{parskip} 
\usepackage{tikz-cd} 
\usepackage{amsmath, amssymb, amsthm}
\usepackage{esint}
\usepackage{hyperref}
\hypersetup{
	colorlinks = false,
	linkbordercolor = {white},
	citebordercolor = {white},
}
\usepackage[utf8]{inputenc}
\usepackage[english]{babel}
\usepackage[alphabetic]{amsrefs}
\pagestyle{headings}
\usepackage{geometry}
\geometry{
	left=20mm,
	right=20mm,
	top=30mm,
	footskip=10mm,
	headsep=10mm
} 
\setlength\parindent{0.5cm}



\makeatother

\setcounter{section}{-1}

\theoremstyle{definition}

\def\fnum{equation} 
\newtheorem{Thm}[\fnum]{Theorem}
\newtheorem{Cor}[\fnum]{Corollary}

\newtheorem{Lem}[\fnum]{Lemma}

\newtheorem{Def}[\fnum]{Definition}

\newtheorem{Pro}[\fnum]{Proposition}

\numberwithin{equation}{section}

\newcommand{\Ker}{{\text{Ker}}}

 \newcommand{\M}{\ensuremath{\mathcal{M}}}

 \newcommand{\ba}{\begin{align*}}
 \newcommand{\ea}{\end{align*}}
 \newcommand{\na}{\nabla}
\newcommand{\la}{\langle}
\newcommand{\ra}{\rangle}


\newcommand{\ovl}[1]{\overline{#1}}
\newcommand{\udl}[1]{\underline{#1}}
\newcommand{\pair}[1]{\left\langle #1\right\rangle}
\newcommand{\bb}[1]{\mathbb{#1}} \newcommand{\td}[1]{\widetilde{#1}}
\newcommand{\abs}[1]{\left|#1\right|}
\newcommand{\pr}[1]{\left(#1\right)}

\def\RR{{\bold R}}

\newcommand{\cC}{{\mathcal{C}}}

\newcommand{\cL}{{\mathcal{L}}}

\newcommand{\cS}{{\mathcal{S}}}

\newcommand{\D}{\Delta}

\newcommand{\n}{\nabla}

\newcommand{\dl}{\mathcal L}
\newcommand{\sbst}{\subseteq}
\newcommand{\h}{\textbf H}

\newcommand{\R}{\textbf{R}}
\newcommand{\bd}{\partial}

\newcommand{\N}{\textbf n}

\newcommand{\supp}{\text{supp}}

\newcommand{\gr}{\text{graph}~}
\newcommand{\rl}{\widetilde{\textbf{r}}_l}
\newcommand{\rr}{\textbf r}
\newcommand{\np}{\n^\perp}
\newcommand{\dlp}{\mathcal L^\perp}
\newcommand{\id}{\text{id}}

 \newcommand{\CS}[1]{\mathcal{CS}_{-1}^{#1,\alpha}(\Sigma)}
 
 \newcommand{\sN}{\mathcal N}

\title[Uniqueness of conical MCF]
{Uniqueness of conical singularities\\for mean curvature flows}

\author{Tang-Kai Lee}%
\author{Xinrui Zhao}%
\address{MIT, Dept. of Math.\\
77 Massachusetts Avenue, Cambridge, MA 02139-4307.}

\date{\today}

\email{tangkai@mit.edu and xrzhao@mit.edu}
\begin{document}

\maketitle

\begin{abstract} 
In this paper, we prove the uniqueness of asymptotically conical tangent flows in all codimensions. 
This is based on an early work of Chodosh-Schulze, who proved the uniqueness in the hypersurface case.
\end{abstract}

\section{\bf Introduction}
Mean curvature flow (MCF) is a family of immersions $\mathbf{x}\colon M^n\times I\to \bb R^{n+k}$ such that 
$\bd_t \mathbf{x} = \mathbf{H}$
where $\mathbf{H}$ is the mean curvature vector of the immersed submanifold $M_t=\mathbf{x}(M,t).$
To understand the structure of a MCF, studying its singularities is the most important problem since they are unavoidable in general.
For example, every MCF starting from a closed hypersurface develops a finite-time singularity.
The singularities are usually investigated via a blowup technique.
This leaves the most fundamental question about the \textit{uniqueness} of these singularity models.

To elaborate, given a MCF $M_t$ with a spacetime singularity at $(\textbf 0,0)\in\bb R^{n+k}\times\bb R,$ we can define a sequence of MCFs $M^i_t:= \lambda_i M_{\lambda_i^{-2}t}$ by rescaling using any sequence $\lambda_i\to\infty.$
Based on Huisken's monotonicity \cite{H90} and Brakke's compactness \cite{B78}, White \cite{W94} and Ilmanen \cite{I95} proved that $M_t^i$ weakly converges to a self-similar limiting flow, called a \textit{tangent flow}, after replacing $\lambda_i$'s with a subsequence.
When the limiting flow is smooth, it is generated by a \textit{self-shrinker}, a submanifold satisfying $\h=-\frac{x^\perp}{2}.$
In general, this limiting flow depends on the sequence we choose to rescale the flow.
That is to say, choosing different subsequences may result in different tangent flows at the same singularity.
Determining the uniqueness of tangent flows is a crucial question if we would like to understand the singularity structure of the flow. 
See e.g., \cite{S14, CM15, CM19, Z20, CS, LSS}.
A similar problem also plays an important role in the study of minimal submanifolds, that is, the uniqueness of tangent cones. 
For related references, one can see \cite{AA, S83, H97, S08, CM14}.

In this paper, we will focus on asymptotically conical self-shrinkers.
An \textit{asymptotically conical} self-shrinker is a self-shrinker that is (smoothly) asymptotic to a cone at infinity. 
Such kinds of self-shrinkers are important in understanding two-dimensional self-shrinkers, see for example \cite{W14}. 
Asymptotically conical self-shrinkers have other interesting properties. 
For example, given any cone in $\bb R^{n+k}$, there is at most one asymptotically conical self-shrinker asymptotic to it even in the case that the codimension is higher than one \cite{W14,Kh}. 
Also, asymptotically conical self-shrinkers as tangent flows of hypersurface MCFs are unique, proven by Chodosh-Schulze \cite{CS}. 
Following their framework and combining with an extension theorem that originated from White's curvature estimate, we can prove a higher-codimensional analogue of Chodosh-Schulze's result.

\begin{Thm}[Uniqueness of asymptotically conical tangent flows]\label{prop:uniqueness}
Let $\Sigma^n\sbst \bb R^{n+k}$ be a multiplicity one asymptotically conical self-shrinker and $(M_t^n)_{t\in(-T,0)}$ a MCF in $\bb R^{n+k}$ such that a tangent flow at $(\textbf 0,0)$ is the MCF generated by $\Sigma.$ 
Then it is the unique tangent flow of $(M_t^n)_{t\in(-T,0)}$ at $(\textbf 0,0)$.
\end{Thm}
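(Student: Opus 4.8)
The plan is to follow the Chodosh--Schulze strategy, which reduces the uniqueness problem to a \emph{Łojasiewicz--Simon} argument for Huisken's $F$-functional, and to upgrade it to higher codimension by using a suitable extension/interpolation result (the ``extension theorem that originated from White's curvature estimate'' referred to above) in place of the hypersurface-specific pseudolocality and curvature-estimate inputs. Concretely: after parabolically rescaling $M_t$ at $(\mathbf 0,0)$ we obtain the rescaled MCF $\tilde M_s$ (in Gaussian/rescaled variables, with $s=-\log(-t)$), which flows by $\partial_s \mathbf x = \mathbf H + \tfrac{x}{2}$ and whose stationary points are exactly the self-shrinkers; Huisken's monotonicity becomes monotonicity of $F(\tilde M_s)=\int e^{-|x|^2/4}$ along this flow, with $\partial_s F = -\int |\mathbf H + x^\perp/2|^2 e^{-|x|^2/4}$. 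The hypothesis says one subsequential limit of $\tilde M_s$ (as $s\to\infty$) is the fixed self-shrinker $\Sigma$, which is asymptotically conical and multiplicity one.

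The key steps, in order, would be: (1) \textbf{Local graphical representation.} Show that for $s$ large, on any fixed compact set the rescaled flow $\tilde M_s$ is a graph of a small function $u(\cdot,s)$ over $\Sigma$ in the normal bundle $N\Sigma$ (smooth convergence on compacta with multiplicity one near a smooth noncollapsed limit — here the higher-codimension extension theorem is what guarantees enough regularity/sheeting, replacing Brakke-type unit-density regularity used in codimension one). (2) \textbf{Weighted $\varepsilon$-regularity and a ``no-neck'' / asymptotics-at-infinity estimate.} Control the flow on the conical end uniformly in $s$, so that the graphical region exhausts $\Sigma$ in the appropriate weighted norm; this is where the asymptotically conical structure is essential, because it gives exponential decay of the Gaussian weight that tames the noncompactness of $\Sigma$. (3) \textbf{Łojasiewicz--Simon inequality for $F$ at $\Sigma$.} Establish, on an appropriate weighted Hölder or Sobolev neighborhood of $\Sigma$ in the space of normal graphs, an inequality of the form $|F(\mathrm{graph}\,u) - F(\Sigma)|^{1-\theta}\le C\,\|\nabla_{F} F(\mathrm{graph}\,u)\|$ for some $\theta\in(0,1/2]$, using that the linearization of $F$ at $\Sigma$ (the stability/Jacobi operator $L = \Delta_\Sigma - \tfrac12 x\cdot\nabla + |A|^2 + \tfrac12 + \text{(curvature terms)}$, now an operator on sections of $N\Sigma$) is Fredholm on the weighted spaces and $F$ is analytic there. (4) \textbf{Integrating the gradient inequality along the flow.} Combine the Łojasiewicz inequality with $\partial_s F = -\|\nabla_F F\|^2$ in the standard way to get $\int_0^\infty \|\partial_s u(\cdot,s)\|\,ds < \infty$, hence $u(\cdot,s)$ converges as $s\to\infty$ to a limiting graph; since one subsequential limit is $\Sigma$ itself (i.e. $u\equiv 0$), the full limit is $\Sigma$, proving uniqueness of the tangent flow.

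The main obstacle I expect is Step (2)–(3): making the functional-analytic framework work in higher codimension. In the hypersurface case the normal bundle is trivial (a line bundle), the Jacobi operator is scalar, and one has a clean Ornstein--Uhlenbeck-type self-adjoint operator on a weighted $L^2(\Sigma)$; in codimension $k>1$ one must work with the full second fundamental form as a vector-valued object, and the linearized operator acting on normal sections involves the normal curvature of $N\Sigma$ and is genuinely a system. One must still verify: self-adjointness/Fredholmness on the correct weighted spaces over the conical end, discreteness of the spectrum, analyticity of $F$ as a map between these Banach spaces, and — crucially — the a priori assumption that the graphical neighborhood is preserved (no sudden loss of the graphical property, i.e. a decay estimate forcing $\tilde M_s$ to stay $C^{2,\alpha}_{\mathrm{loc}}$-close to $\Sigma$ with weighted smallness). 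This last point is exactly where the ``extension theorem originating from White's local curvature estimate'' enters: it promotes closeness-in-a-weak-sense plus a curvature bound into genuine smooth graphical closeness uniformly in $s$, which in codimension one was supplied by Brakke/White regularity together with pseudolocality but which in general codimension needs the more robust extension statement. Once that a priori graphical control and the Fredholm/analyticity package for the vector-valued operator are in hand, Steps (1) and (4) are essentially identical to \cite{CS}.
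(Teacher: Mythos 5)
Your proposal follows essentially the same route as the paper: rescale the flow, set up weighted H\"older/Sobolev spaces of normal sections so that the vector-valued stability operator is Fredholm, derive the entire and then localized \L ojasiewicz--Simon inequalities via Simon's reduction, use White's local curvature estimate together with asymptotic conicality to extend graphicality (replacing the barrier/maximum-principle argument available only for hypersurfaces), and integrate the gradient inequality to conclude. The key new ingredient you single out --- the extension step based on White's estimate --- is exactly Proposition \ref{exp} in the paper, so the plan is correct and matches the paper's argument.
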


Uniqueness properties of tangent flows for geometric flows  usually follow from a \L ojasiewicz-type inequality.
For geometric applications, these kinds of inequalities were first proven by Simon \cite{S83} by using the Lyapunov-Schmidt reduction and applying the original inequality invented by \L ojasiewicz \cite{L} for real analytic functions.
As a consequence, Simon used these infinite dimensional \L ojasiewicz inequalities, now often called \L ojasiewicz-Simon inequalities, to prove the uniqueness of tangent cones with smooth cross sections of minimal surfaces.
Simon's framework was later applied to the study of compact MCFs by Schulze \cite{S14}. 
By setting up appropriate function spaces on asymptotically self-shrinking hypersurfaces, Chodosh-Schulze \cite{CS} successfully obtained \L ojasiewicz-Simon inequalities on them.
It is worth mentioning that the \L ojasiewicz inequalities obtained by Colding-Minicozzi \cite{CM15,CM19} and Zhu \cite{Z20} were proved without using Simon's theory, and they were able to get explicit \L ojasiewicz inequalities by hand.

The proof of Theorem \ref{prop:uniqueness} is based on the ideas of Chodosh-Schulze \cite{CS}.
The first main idea is, as mentioned above, to construct some function spaces to replace the ordinary H\"older spaces so that we can establish the desired elliptic estimates, based on which Simon's techniques \cite{S83} can be applied.
Elements in these new function spaces are entire vector fields with appropriate decay rates.
Establishing good mapping properties among this new function spaces leads to the Simon-\L ojasiewicz inequality for entire vector fields with sufficiently small norms (in the new function spaces).
Nevertheless, in a blowup sequence, the locally smooth convergence only gives us closeness on (probably large) compact parts of the flows.
As a result, we need to show that some \L ojasiewicz-type inequalities can be derived for vector fields defined only over parts of the self-shrinker.
To deal with this localization process, we need to say that the \textit{rough conical scale} (Definition \ref{scale} (2)) improves whenever the flow can be written as the graph of a (small enough) vector field on a large enough compact set.
Based on this, we can extend the graphicality up to the \textit{shrinker scale} (Definition \ref{scale} (1)).
This allows us to obtain the desired localized \L ojasiewicz-Simon inequality, leading to the uniqueness of tangent flows.

The main goal of this note is to implement the plan above for higher-codimensional MCFs.
It is known that extending results in the hypersurface setting to the higher-codimensional one is not always direct.
The main difficulties include the complexity of curvature tensors and the lack of certain maximum principles.
In our case, most of the work in \cite{CS} can be generalized to the higher-codimensional setting by standard extensions.
However, there are still some places where we need new arguments, and they are all related to the use of certain maximum principles in the hypersurface case.
For example, when proving the extension result in the hypersurface case, Chodosh-Schulze used a barrier principle to show a curvature estimate.
This, in general, does not apply to a MCF of submanifolds of higher codimensions.
To deal with this, we combine the asymptotic conicality and White's curvature estimate to obtain the desired curvature estimate (cf. Proposition \ref{exp}).
This finally leads to the extension step that is necessary for us to utilize the localized \L ojasiewicz-Simon inequality.

This paper is organized as follows. 
In Section \ref{sec:1} and \ref{sec:2}, we give some preliminary notations and results for asymptotically conical self-shrinkers and their normal bundles.
In Section \ref{sec:3}, we talk about entire \L ojasiewicz-Simon inequalities and localized \L ojasiewicz-Simon inequalities. 
In Section \ref{sec:4}, the uniqueness of higher-codimensioanl asymptotically conical tangent flow of MCF is proven.

\section*{Acknowledgement}
The authors are very grateful to Prof. Tobias Colding and Prof. William Minicozzi for their constant support and several inspiring conversations. 
They would also like to thank Prof. Jonathan Zhu for clarifying some results in his paper, thank him, Prof. Otis Chodosh, Prof. Yng-Ing Lee and Prof. Mu-Tao Wang for their interests, and thank Zhihan Wang for pointing out some imprecise statements in an early version.
During the project, Lee was partially supported by NSF Grant DMS 2005345 and Zhao was supported by NSF Grant DMS 1812142 and NSF Grant DMS 1811267.

\section{\bf Geometric and analytic preliminaries}\label{sec:1}

Most of the properties and definitions in this section are similar to the ones in \cite{CS}. 
We mention the parts that we will need or that require some slight modifications in the higher-codimensional case.

\subsection{Estimate on ends}
Consider an asymptotic conical self-shrinker $\Sigma^n\sbst \bb R^{n+k}$ and assume that the asymptotic cone 
\begin{align*}
    \mathcal{C} = \lim\limits_{t\to 0}\sqrt{-t}\Sigma
\end{align*}
is the cone over $\Gamma^{n-1}\sbst S^{n+k-1}.$ 
The metric on $\mathcal{C}$ is $g_{\mathcal{C}}=dr\otimes dr+r^2g_{\Gamma}$. Then from the smooth convergence we know the following decay of curvatures (cf. \cite{W14,CS}).

\begin{Lem}\label{sff}
 There exist $C>0$ and $R_1 \geq 0$ such that for $x\in \Sigma \setminus B_{R_1}$ and $i\ge 0,$
 \begin{align*}
     |\nabla^iA(x)|\leq C |x|^{-i-1}.
 \end{align*}
\end{Lem}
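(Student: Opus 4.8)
The plan is to deduce the decay estimate $|\nabla^i A(x)| \le C|x|^{-i-1}$ on the end $\Sigma \setminus B_{R_1}$ from the fact that the rescalings $\sqrt{-t}\,\Sigma$ converge smoothly to the cone $\mathcal C$ as $t \to 0^-$, combined with a parabolic (or elliptic) rescaling argument. The key observation is that, since $\Sigma$ is a self-shrinker, the generated MCF is $\Sigma_t = \sqrt{-t}\,\Sigma$ for $t \in (-\infty, 0)$, so the smooth convergence $\sqrt{-t}\,\Sigma \to \mathcal C$ is equivalent to controlling $\Sigma$ near infinity. Concretely, for a point $x \in \Sigma$ with $|x| = \rho$ large, I would set $\lambda = \rho^{-1}$ and consider the rescaled self-shrinker $\lambda \Sigma$ near the point $\lambda x$, which lies at unit distance from the origin. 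Since $\lambda \Sigma = \sqrt{-t}\,\Sigma / (\sqrt{-t}\,\rho)$ for an appropriate choice of $t$, the smooth convergence to the cone $\mathcal C$ (which is scale-invariant and smooth away from its vertex) forces $\lambda \Sigma$ to be smoothly $C^{i+1}$-close, on a fixed-size ball around $\lambda x$, to the cone $\mathcal C$. In particular all the curvature quantities $|\nabla^j A|$ of $\lambda \Sigma$ at $\lambda x$, for $0 \le j \le i$, are bounded by a uniform constant depending only on $\mathcal C$ and $i$ (not on $\rho$), provided $\rho \ge R_1$ for a suitable $R_1 = R_1(\mathcal C, i)$.

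The second step is to unwind the scaling. Under the dilation $x \mapsto \lambda x$ the second fundamental form scales as $A \mapsto \lambda^{-1} A$ and its $j$-th covariant derivative scales as $\nabla^j A \mapsto \lambda^{-j-1}\nabla^j A$. Hence the uniform bound $|\nabla^j A_{\lambda\Sigma}(\lambda x)| \le C_0$ translates to $|\nabla^j A_\Sigma(x)| \le C_0 \lambda^{j+1} = C_0 \rho^{-j-1} = C_0 |x|^{-j-1}$, which is exactly the claimed estimate. To make this uniform over the whole end, I would either run the argument for each $x$ with $|x|$ large and extract the constant from the compactness of $\Gamma \subset S^{n+k-1}$ (so that all "unit-scale pictures" live in a precompact family), or invoke a standard point-picking/contradiction argument: if the estimate failed, there would be points $x_j \in \Sigma$ with $|x_j| \to \infty$ and $|x_j|^{i+1}|\nabla^i A(x_j)| \to \infty$, and then $|x_j|^{-1}\Sigma$ shifted to have $|x_j|^{-1}x_j$ at unit distance would, by the smooth convergence to $\mathcal C$, subconverge smoothly to (a piece of) the smooth cone $\mathcal C$, contradicting the blow-up of curvature.

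I would treat the $i = 0$ case as the base case (it is essentially the statement that $\Sigma$ is $C^2$-asymptotic to $\mathcal C$ with the natural rate, which is part of the definition of asymptotically conical together with Lemma-type facts from \cite{W14}) and then note that the higher-derivative bounds follow either by the same rescaling argument applied to $\nabla^i A$ directly, or by interior Schauder-type estimates for the (quasilinear, elliptic) self-shrinker equation $\h = -\tfrac{x^\perp}{2}$ bootstrapped on unit balls of the rescaled surfaces. The codimension plays no essential role here: the rescaling, the smooth convergence to the cone, and the scaling weights of $\nabla^i A$ are all codimension-independent, so the hypersurface argument in \cite{W14,CS} carries over verbatim.

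The main obstacle I anticipate is purely bookkeeping: making the "uniform closeness to the cone on unit balls" precise and uniform in the base point $x$, i.e., verifying that the constant $C$ and the radius $R_1$ can be chosen independent of which end-point $x$ we look at and of the direction $x/|x| \in \Gamma$. This is handled by the compactness of the link $\Gamma$ together with the smooth convergence $\sqrt{-t}\,\Sigma \to \mathcal C$, but it requires one to set up the contradiction/compactness argument carefully rather than doing a single naive rescaling; everything else is routine scaling analysis.
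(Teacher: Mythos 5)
Your rescaling argument (dilate by $|x|^{-1}$, use the smooth convergence $\sqrt{-t}\,\Sigma\to\mathcal C$ on a fixed annulus away from the vertex, then unwind the scaling weights of $\nabla^j A$) is correct and is exactly the mechanism the paper invokes: the lemma is stated as a direct consequence of the smooth convergence to the asymptotic cone, with the details deferred to \cite{W14,CS}. Your proposal simply fills in that standard argument, so it matches the paper's approach; the only cosmetic caveat is that the constants $C,R_1$ you produce depend on $i$, which is what is actually needed in the sequel.
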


From this lemma and the self-shrinker equation $\mathbf{H}=\frac{x^{\perp}}{2}$, we have an improved estimate
$$
|\nabla^ix^{\perp}|\leq C |x|^{-i-1}.
$$
We list some other estimates we will use later.
Their proofs are similar to those in the hypersurface case.

\begin{Lem}\label{graph}
For $R>0$ sufficiently large, there is $W\in \Gamma(N(\cC\setminus B_R(0)))$ so that 
\begin{align*}
     \gr W:=\{p+W(p)\,:\,p\in \cC\setminus B_R(0)\}\sbst\Sigma
\end{align*}
parametrizes $\Sigma$ outside a compact set. 
The section $W$ satisfies 
\begin{align}
    |W|=O(r^{-1})\text{ and }
    |\nabla^{(j)}W|=O(r^{-1-j+\eta})\label{west}
\end{align}
as $r\to \infty$ for any $\eta>0$ and $j\geq 1$. 
Moreover, the radial derivatives satisfy sharper relations $|\na_{\partial_ r}^{j}W|=O(r^{-1-j})$.
\end{Lem}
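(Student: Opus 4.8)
\emph{Proof sketch.} The plan is to first produce $W$ from the smooth asymptotics, and then to upgrade the (a priori crude) decay to the sharp rates using Lemma \ref{sff}, the shrinker equation, and the homogeneity of the cone. For the existence step, recall that asymptotic conicality means $\lambda^{-1}\Sigma\to\cC$ in $\Cloc(\bb R^{n+k}\setminus\{0\})$ as $\lambda\to\infty$. In particular, on a fixed annulus $B_2(0)\setminus B_1(0)$ the rescaled ends $R^{-1}\big(\Sigma\setminus B_R(0)\big)$ converge smoothly to $\cC\cap\big(B_2(0)\setminus B_1(0)\big)$, so for $R$ large each such piece is a normal graph over $\cC$ whose graph function and all derivatives tend to $0$. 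These annular graphs agree on overlaps (both parametrize $\Sigma$), hence patch to a section $W\in\Gamma\big(N(\cC\setminus B_R(0))\big)$ with $\gr W=\Sigma\setminus(\text{compact set})$ and the crude bounds $|W|=o(r)$ and $|\na^{(j)}W|=o\big(r^{1-j}\big)$ ($j\ge 1$).

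For the sharp radial estimate, I would use that the asymptotic cone of a shrinker is a minimal cone and that the second fundamental form of a cone vanishes in the radial direction; consequently the ambient radial derivative of a normal section along $\cC$ coincides with its normal-connection radial derivative, and along a ray $s\mapsto s\omega$ with $\omega\in\Gamma$ the curve $\gamma_\omega(s)=s\omega+W(s\omega)$ lies in $\Sigma$ and satisfies $\gamma_\omega'(s)=\p_r+\na_{\p_r}W$. Restricting the graphical shrinker equation together with its covariant derivatives to such a ray, and using $\mathbf{H}=\tfrac12 x^{\perp}$, turns the radial part of the equation into a system for the radial jets $\{\na_{\p_r}^{i}W\}$ that is, to leading order, an Euler-type ODE in $s$ with inhomogeneities bounded by $|\na^{(i)}A|\le Cs^{-1-i}$ (Lemma \ref{sff}) and $|\na^{(i)}x^{\perp}|\le Cs^{-1-i}$. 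Since the indicial exponents of the homogeneous part avoid those of the forcing, integrating and using $W\to 0$ gives
\begin{equation*}
|\na_{\p_r}^{j}W|=O\!\big(r^{-1-j}\big),\qquad j\ge 1.
\end{equation*}

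Integrating $|\na_{\p_r}W|=O(s^{-2})$ along each ray from $s=r$ to $s=\infty$ and using $|W|\to 0$ then gives $|W|=O(r^{-1})$. For the remaining (non-radial) derivatives I would differentiate the graphical shrinker equation — a second-order equation for $W$ on $\cC\setminus B_R(0)$ that becomes uniformly elliptic after rescaling by $r$ — and apply interior Schauder estimates on the scale-$r$ annulus: the decay $|W|=O(r^{-1})$ together with the radial gain just obtained propagates to $|\na^{(j)}W|=O\big(r^{-1-j+\eta}\big)$ for every $\eta>0$ and $j\ge1$. The small loss $\eta$ is produced by the scale-invariant interpolation/Schauder step (equivalently by the borderline indicial behaviour of the linearized cone operator in the spherical directions) and is in general unavoidable.

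The main obstacle is the last two steps: smooth convergence alone yields only $o(r^{1-j})$, and obtaining the sharp $r^{-1}$ decay — and the one-order-better radial rates — requires combining the precise curvature decay of Lemma \ref{sff}, the shrinker equation, and the cone's homogeneity, with a careful separation of the radial and spherical directions. The existence of the graph is, by comparison, immediate from the definition.
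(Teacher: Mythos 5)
Your existence step and your treatment of the radial direction match the standard argument (the paper itself offers no proof here, deferring to \cite{W14,CS}): locally smooth convergence of the rescalings $R^{-1}\Sigma$ to $\cC$ on annuli produces the normal graph together with the crude bounds $|\nabla^{(j)}W|=o(r^{1-j})$, and the shrinker equation combined with $p^{\perp}=0$ on the cone and $A_{\cC}(\partial_r,\cdot)=0$ gives $|r\partial_rW-W|=|x^{\perp}|+O(r^{-1})=2|\mathbf{H}|+O(r^{-1})=O(r^{-1})$ (this identity is recorded as the first assertion of Lemma \ref{normal}). This is the Euler equation $\partial_r(W/r)=O(r^{-3})$; integrating from $r$ to $\infty$ using $W=o(r)$ gives $|W|=O(r^{-1})$, back-substituting gives $|\partial_rW|=O(r^{-2})$, and radial differentiation gives the higher radial rates. (One ordering remark: the $C^0$ bound must come out of the ODE \emph{before} the radial derivative bound; your sketch asserts the radial jets first and then integrates to get $|W|=O(r^{-1})$, which is circular as stated, though trivially rearranged.)

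The step that does not work as written is the claim that the graphical shrinker equation ``becomes uniformly elliptic after rescaling by $r$'' so that interior Schauder on scale-$r$ annuli yields $|\nabla^{(j)}W|=O(r^{-1-j+\eta})$. The equation carries the drift term $\tfrac{1}{2}x\cdot\nabla$ (equivalently, $\tfrac{r}{2}\partial_rW-\tfrac12 W$ enters through $x^{\perp}$), whose coefficient grows linearly in $r$; after rescaling a scale-$r$ annulus to unit size this coefficient is of order $r^{2}$, so the Schauder constants degenerate as $r\to\infty$. This is precisely the difficulty that motivates the anisotropic spaces $C^{2,\alpha}_{\text{an},-1}$ and the non-standard parabolic Schauder estimate of Proposition \ref{intSch} later in the paper, and it cannot be waved away at this stage. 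The correct mechanism for the non-radial derivatives is the pure interpolation you mention only parenthetically: on intrinsic balls of radius comparable to $r$, interpolate between $\sup|W|=O(r^{-1})$ and $\sup|\nabla^{(m)}W|=o(r^{1-m})$ to get $|\nabla^{(j)}W|=O(r^{-1-j+2j/m})$, then choose $m$ large depending on $j$ and $\eta$. With that substitution (and dropping the Schauder claim) your outline is complete and agrees with the cited proof.
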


\begin{Cor}\label{sharp-metric}
For large $R>0$ and by denoting the corresponding parametrization of $\Sigma\setminus B_R(0)$ using $\cC$ as $F$, we have the following improved estimates on the induced metric 
\[
F^*g_{\Sigma} = dr\otimes dr + r^{2}g_{\Gamma} + h 
\]
where $h$ is a symmetric $(0,2)$-tensor on $\cC\setminus B_{R}(0)$ satisfying $|h| = O(r^{-2})$ and $|\nabla^{(j)}h| = O(r^{-2-j+\eta})$ as $r\to\infty$, for all $j\geq 1$ and $\eta>0$. 
\end{Cor}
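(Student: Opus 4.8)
The plan is to derive Corollary~\ref{sharp-metric} directly from Lemma~\ref{graph} by writing the induced metric on $\Sigma \setminus B_R(0)$ as the pullback of the Euclidean metric under the map $F(p) = p + W(p)$ for $p \in \cC \setminus B_R(0)$. Since $\cC$ carries the cone metric $g_{\cC} = dr \otimes dr + r^2 g_\Gamma$ and $W \in \Gamma(N(\cC \setminus B_R(0)))$ is a normal section, I would first compute $dF = \id + \nabla W$, splitting $\nabla W$ into its tangential and normal components relative to $\cC$. Then $F^* g_\Sigma = g_{\cC} + h$ where $h$ collects all terms involving $W$ and its first derivatives: schematically $h = \langle \nabla W, \cdot \rangle + \langle \cdot, \nabla W \rangle + \langle \nabla W, \nabla W \rangle$ plus a term coming from the second fundamental form of $\cC$ in $\bb R^{n+k}$ contracted against $W$ (the usual $\langle W, A_\cC \rangle$-type term arising because moving off the cone in a normal direction distorts lengths at first order in $|W|$).

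Next I would estimate each piece using the decay rates from Lemma~\ref{graph}. The bound $|W| = O(r^{-1})$ together with the fact that $\cC$'s second fundamental form scales like $r^{-1}$ (it is a cone, so $|A_\cC| = O(r^{-1})$, and more precisely the relevant combination is scale-invariant on the link) gives that the $\langle W, A_\cC\rangle$ term contributes $O(r^{-2})$ to $|h|$. The genuinely first-order terms $\langle \nabla W, \cdot\rangle$ need care: from $|\nabla^{(1)} W| = O(r^{-2+\eta})$ one gets $O(r^{-2+\eta})$, which is slightly worse than $O(r^{-2})$, so to obtain the sharp $|h| = O(r^{-2})$ I would invoke the sharper radial estimate $|\nabla_{\partial_r}^{j} W| = O(r^{-1-j})$: the components of $h$ that are worst-behaved dimensionally are precisely the $dr \otimes dr$ and mixed $dr$-components, and these only see radial derivatives of $W$, hence are $O(r^{-2})$; the purely spherical components of $h$ pick up a compensating factor of $r^2$ from $g_\Gamma$ but the corresponding derivatives of $W$ along $\Gamma$-directions gain back two powers, so the net is again $O(r^{-2})$. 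For the higher derivatives I would differentiate the expression for $h$ and feed in $|\nabla^{(j)} W| = O(r^{-1-j+\eta})$ term by term, using the Leibniz rule and the scaling of the cone's curvature and its derivatives; each application of $\nabla$ costs one power of $r$, yielding $|\nabla^{(j)} h| = O(r^{-2-j+\eta})$ for every $\eta > 0$ and $j \geq 1$.

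The main obstacle I anticipate is bookkeeping the interaction between the non-sharp estimate $|\nabla^{(1)}W| = O(r^{-2+\eta})$ and the desired sharp leading bound $|h| = O(r^{-2})$ on the nose (without the $\eta$). The resolution must come from distinguishing directions: one has to observe that the loss of $\eta$ in Lemma~\ref{graph} occurs only in the non-radial derivatives, and that those enter $h$ always paired with the rescaling factors $r$ or $r^2$ from the cone metric in a way that, combined with the homogeneity of $g_\Gamma$, leaves $|h|$ genuinely $O(r^{-2})$ --- or, alternatively, to simply accept $|h| = O(r^{-2+\eta})$ if that suffices downstream and check the statement is used only in that form. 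I would also need to be slightly careful that $g_\Gamma$ and its covariant derivatives on the compact link $\Gamma$ are bounded, so that all ``constants'' hidden in the $O(\cdot)$ notation are uniform in the angular variables; this is immediate from compactness of $\Gamma$ and smoothness of the embedding $\Gamma \sbst S^{n+k-1}$. Apart from that the argument is a routine, if somewhat tedious, expansion, entirely parallel to the hypersurface computation in \cite{CS}.
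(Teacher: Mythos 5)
Your overall strategy is the paper's: write $F(p)=p+W(p)$, pull back the Euclidean metric, and feed in the decay rates of Lemma \ref{graph}. You also correctly isolate the crux, namely that $|\nabla^{(1)}W|=O(r^{-2+\eta})$ is a priori too weak to give $|h|=O(r^{-2})$ on the nose. But your proposed resolution of that crux is wrong, and this is a genuine gap. You claim that the dangerous first-order terms are only the $dr\otimes dr$ and mixed $dr$-components (handled by the sharp radial estimate), while for the purely spherical components ``the corresponding derivatives of $W$ along $\Gamma$-directions gain back two powers.'' They do not: measured in the orthonormal frame $\{\partial_r, r^{-1}\partial_{\omega_i}\}$, the angular derivative $r^{-1}\partial_{\omega_i}W$ is exactly of size $O(r^{-2+\eta})$, with no further gain. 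If that vector had a nonzero tangential component of that size, the spherical components of $h$ would indeed only be $O(r^{-2+\eta})$.

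The actual mechanism, which is what the paper's computation records, is orthogonality rather than extra decay. Writing $W=W^\beta\N_\beta$ in a local orthonormal frame of $N\cC$, one has
\[
r^{-1}\partial_{\omega_i}F \;=\; r^{-1}\partial_{\omega_i} \;+\; r^{-1}\pr{\partial_{\omega_i}W^\beta}\N_\beta \;+\; r^{-1}W^\beta\,\partial_{\omega_i}\N_\beta .
\]
The $O(r^{-2+\eta})$ piece $r^{-1}(\partial_{\omega_i}W^\beta)\N_\beta$ is \emph{normal} to $\cC$, so it pairs to zero with the cone tangent vectors $\partial_r$ and $r^{-1}\partial_{\omega_j}$ and therefore enters $h$ only quadratically, contributing $O(r^{-4+2\eta})$. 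The only first-order contribution in the spherical directions comes from the tangential part of $r^{-1}W^\beta\partial_{\omega_i}\N_\beta$, i.e.\ the $\pair{W,A_\cC}$-type term, which is $O(r^{-1})\cdot O(r^{-1})=O(r^{-2})$ (and has no radial component since $A_\cC(\partial_r,\cdot)=0$). This is precisely the content of the facts $\pair{r^{-1}\partial_{\omega_i}\N_\beta,\partial_r}=0$ and $|\pair{r^{-1}\partial_{\omega_i}\N_\beta,\nu_{\cC,k}}|\le C$ listed in the paper. Your fallback of accepting $O(r^{-2+\eta})$ is unnecessary once this is observed. Your treatment of the higher derivatives (Leibniz plus the $\eta$-losing bounds, or equivalently interpolation as in Lemma \ref{graph}) is fine.
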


The idea of Corollary \ref{sharp-metric} is the same as the one in \cite{CS}.
If we write $F: \cC\setminus B_{R}(0)\to\Sigma$, $F(p) = p + W(p)$ and $W=W^\beta\N_{\beta}$ where $\N_\beta$'s form a local orthonormal basis of $N\cC,$
then we know that
\begin{align*}
 r^{-1}(\partial_{\omega_{i}} W)(p)= r^{-1}\pr{\partial_{\omega_{i}}W^\beta}\N_{\beta}+r^{-1}W^\beta\partial_{\omega_{i}}\N_{\beta}.
\end{align*} 
Using a rotation as in Lemma \ref{graph}, we can  compute (using the fact that $A_{\cC}(\partial_{r},\cdot) = 0$)
\begin{align*}
\partial_{r}F = \partial_{r} + (\partial_{r}W)(p)\text{ and } 
r^{-1}\partial_{\omega_{i}} F = r^{-1} \partial_{\omega_{i}} + r^{-1}(\partial_{\omega_{i}}W^\beta)\N_{\beta} + r^{-1}W^\beta\partial_{\omega_{i}} \N_{\beta}.
\end{align*}
Since we have $|W|=O(r^{-1}),$ $|\partial_r W|=O(r^{-2}),$ $r^{-1}|\partial_{\omega_i} W|=O(r^{-2+\eta}),$ $\pair{r^{-1} \partial_{\omega_{i}}\N_{\beta},\,\,\partial_r},$ and $|\langle r^{-1}\partial_{\omega_{i}}\N_{\beta},\,\,\nu_{\cC,k} \rangle|\leq C,$ we have $|h| = O(r^{-2}).$
The higher derivative estimates follow from interpolation, as in Lemma \ref{graph}. 
We also have the following estimates for the normal bundle of $\Sigma$.

\begin{Lem}\label{normal}
We have $|\nabla^{(j)}(r\partial_{r}W - W)|(p) = O(r^{-1-j})$ for any $j\geq 0,$
\begin{align*}
A_{\Sigma}(\partial_{r}F,\partial_{r}F) & = O(r^{-3}),\\
A_{\Sigma}(\partial_{r}F,r^{-1}\partial_{\omega_{i}}F) & = O(r^{-3}),\text{ and }\\
A_{\Sigma}(r^{-1}\partial_{\omega_{i}}F,r^{-1}\partial_{\omega_{j}}F) & = A_{\cC}(r^{-1}\partial_{\omega_{i}},r^{-1}\partial_{\omega_{j}}) +O(r^{-3+\eta})
\end{align*}
as $r\to\infty$, and $|\nabla^{(k)}_{\cC} (A_{\Sigma}\circ F - A_{\cC})| = O(r^{-3-k+\eta})$ for any $\eta>0$ and $k\geq 1$. 
Moreover, the vector field $V : = \mathbf{proj}_{T\Sigma}F(p) - r\partial_{r} F$ is tangent to $\Sigma$ and satisfies $|V| = O(r^{-1})$, $|\nabla^{(k)}V| = O(r^{-1-k+\eta})$ for $\eta>0$. And the radial derivative satisfies
\[
\vec{x}\cdot \nabla_{\Sigma} f =r(\partial_{r}f)^\perp + \alpha_{3}\cdot\nabla_{g_{\cC}} f,
\]
where $|\alpha_{3}| = O(r^{-1})$ and $|\nabla^{(j)}\alpha_{3}| = O(r^{-1-j+\eta})$ for $\eta>0$ and $j\geq 1$, as $r\to\infty$. 
\end{Lem}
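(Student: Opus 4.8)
The plan is to carry out, in the graph parametrization of the end of $\Sigma$ over $\cC$, the same computations as in \cite{CS}, with the single unit normal replaced by a local orthonormal frame $\{\N_\beta\}$ of $N\cC$ and ordinary derivatives replaced by the normal connection $\nabla^{\perp}$; throughout I write $D$ for the Euclidean covariant derivative of $\bb R^{n+k}$. The starting point is that, since $\cC$ is a cone, one may choose $\{\N_\beta\}$ invariant under the dilations $x\mapsto\lambda x$: then $\nabla^{\perp}_{\partial_r}\N_\beta=0$, the rays are ambient straight lines so $D_{\partial_r}\partial_r=0$ and $D_{\partial_r}(r^{-1}\partial_{\omega_i})=0$, and the sharp radial estimates of Lemma \ref{graph} become the pointwise bounds $|\partial_{r}^{j}W^\beta|=O(r^{-1-j})$ on the components of $W=W^\beta\N_\beta$. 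Since every quantity in the statement is a norm, hence frame-independent, it suffices to compute in such a frame near each ray.

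For the second fundamental form I would use $A_\Sigma(dF(X),dF(Y))=(D_{dF(X)}dF(Y))^{\perp_\Sigma}$ together with $dF(X)=X+D_X W$ and $X,Y\in\{\partial_r,\,r^{-1}\partial_{\omega_i}\}$. When $X$ or $Y$ is radial, the principal terms $D_{\partial_r}\partial_r$ and $D_{\partial_r}(r^{-1}\partial_{\omega_i})$ vanish, so the surviving contributions are governed by radial derivatives of $W$; combined with the sharp radial estimates and the identity $\langle x,D_{\partial_r}W\rangle=\tfrac12\partial_r|W|^2=O(r^{-3})$ (valid since $x=r\partial_r+W$ and $D_{\partial_r}W\perp\cC$), this gives $A_\Sigma(\partial_r F,\partial_r F)=O(r^{-3})$ and $A_\Sigma(\partial_r F,r^{-1}\partial_{\omega_i}F)=O(r^{-3})$. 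For the angular–angular component the principal term is $D_{r^{-1}\partial_{\omega_i}}(r^{-1}\partial_{\omega_j})$, whose normal-to-$\cC$ part is $A_\cC(r^{-1}\partial_{\omega_i},r^{-1}\partial_{\omega_j})$; the remaining terms involve at most two derivatives of $W$ and are $O(r^{-3+\eta})$, and passing from the projection onto $N\cC$ to the projection onto $N\Sigma$ costs $|A_\cC|$ times the $O(r^{-2+\eta})$ angle between $N\Sigma$ and $N\cC$ (controlled by $|\nabla^{(1)}W|$), giving the third estimate. Then $|\nabla^{(k)}_\cC(A_\Sigma\circ F-A_\cC)|=O(r^{-3-k+\eta})$ follows by differentiating these identities and combining the higher-order bounds for $W$ (Lemma \ref{graph}) and for the metric perturbation $h$ (Corollary \ref{sharp-metric}) with interpolation, as in the proof of Lemma \ref{graph}.

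The field $V=\mathbf{proj}_{T\Sigma}F(p)-r\partial_r F$ is tangent to $\Sigma$ since both $\mathbf{proj}_{T\Sigma}F(p)=x^{\top_\Sigma}$ and $r\partial_r F=dF(r\partial_r)$ are. Writing $x=r\partial_r+W$ and $\partial_r=\partial_r F-D_{\partial_r}W$ gives $V=-r(D_{\partial_r}W)^{\top_\Sigma}+W^{\top_\Sigma}$, and since $|D_{\partial_r}W|=O(r^{-2})$ and $|W|=O(r^{-1})$ we obtain $|V|=O(r^{-1})$; the bounds $|\nabla^{(k)}V|=O(r^{-1-k+\eta})$ follow again by differentiation and interpolation. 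Finally, decomposing $x^{\top_\Sigma}=r\partial_r F+V$ and pairing with $\nabla_\Sigma f$ produces the identity $\vec{x}\cdot\nabla_\Sigma f=r(\partial_r f)^{\perp}+\alpha_3\cdot\nabla_{g_\cC}f$, where $\alpha_3$ absorbs $V$ as well as the discrepancy between $\nabla_\Sigma$ and $\nabla_{g_\cC}$ arising from the perturbation $h=O(r^{-2})$ of Corollary \ref{sharp-metric}; hence $|\alpha_3|=O(r^{-1})$ with the asserted derivative bounds.

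The one place requiring more than routine care is $|\nabla^{(j)}(r\partial_r W-W)|=O(r^{-1-j})$, which carries no $\eta$-loss although $\nabla^{(j)}W$ does. The case $j=0$ is immediate from $|W|=O(r^{-1})$ and $|D_{\partial_r}W|=O(r^{-2})$, and for pure radial derivatives Leibniz (with $\partial_r r=1$) gives $\nabla_{\partial_r}^{j}(r\partial_r W-W)=r\,\nabla_{\partial_r}^{j+1}W+(j-1)\nabla_{\partial_r}^{j}W=O(r^{-1-j})$. For mixed derivatives the $\eta$-loss is only apparent and is removed by peeling off the leading term of $W$, which is an honest smooth section of degree $-1$ pulled back from the compact link $\Gamma$: on that term $r\partial_r-\mathrm{id}$ acts as multiplication by $-2$, so it remains a smooth section of degree $-1$ with $O(r^{-1-j})$ derivatives, while the faster-decaying remainder is controlled by commuting $\partial_r$ past the angular coordinate fields (using $[\partial_r,\partial_{\omega_i}]=0$ and $R^{\perp}_\cC=O(r^{-2})$) together with the sharp radial estimates. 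This refinement, and the bookkeeping forced by the normal connection, are the only substantive departures from the hypersurface arguments of \cite{CS}.
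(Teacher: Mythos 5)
Your overall strategy (graphing the end over $\cC$, choosing a dilation-invariant orthonormal frame of $N\cC$ so that $\nabla^\perp_{\partial_r}\N_\beta=0$, and computing $A_\Sigma$ directly in the parametrization) is the right one, and several pieces — the radial--radial component via $(D^2_{\partial_r}W)^{\perp_\Sigma}$, the tangency and size of $V$, the decomposition of $\vec x\cdot\nabla_\Sigma$ — are fine. The paper itself gives no proof of this lemma beyond deferring to the hypersurface case. However, there is a genuine gap at exactly the step you single out as delicate. The sharp, $\eta$-free bound $|\nabla^{(j)}(r\partial_rW-W)|=O(r^{-1-j})$ cannot be obtained by ``peeling off the leading term of $W$'': you are assuming $W=r^{-1}w_{-1}(\omega)+(\text{remainder})$ with $w_{-1}$ smooth on $\Gamma$ and a remainder all of whose derivatives decay strictly faster, but Lemma \ref{graph} provides only $|W|=O(r^{-1})$ and $|\nabla^{(j)}W|=O(r^{-1-j+\eta})$, and the $\eta$-loss is precisely the signal that no such expansion is available at this stage — what you posit is essentially equivalent to what you are trying to prove. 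The correct mechanism is the self-shrinker equation: since $x=F(p)=r\partial_r+W$ and $\partial_rF=\partial_r+D_{\partial_r}W$ is tangent to $\Sigma$, one has $x^{\perp_\Sigma}=(W-rD_{\partial_r}W)^{\perp_\Sigma}=W-r\partial_rW+O(r^{-3+\eta})$ (the error being the $N\cC$-versus-$N\Sigma$ projection discrepancy, of size $O(|\nabla W|\cdot|W-r\partial_rW|)$, with analogous gains for derivatives), and then the improved decay $|\nabla^i x^\perp|\le C|x|^{-1-i}$ recorded right after Lemma \ref{sff} (i.e. $x^\perp=2\mathbf H$ together with the curvature decay) delivers the claim.

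This gap propagates to the mixed second fundamental form term, where your argument is also incorrect as written: the surviving contributions are not governed by radial derivatives of $W$ alone. Computing $A_\Sigma(\partial_rF,r^{-1}\partial_{\omega_i}F)$ as the $N\Sigma$-projection of $\partial_r\bigl(r^{-1}\partial_{\omega_i}(p+W)\bigr)$ leaves $-r^{-2}\partial_{\omega_i}W+r^{-1}\partial_{\omega_i}\partial_rW$, and each of these terms separately is only $O(r^{-3+\eta})$ by Lemma \ref{graph}. The stated $O(r^{-3})$ comes from recognizing this combination as $r^{-2}\partial_{\omega_i}(r\partial_rW-W)$ and invoking the first assertion of the lemma with $j=1$. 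So the logical order matters: the shrinker-equation estimate on $r\partial_rW-W$ must be established first, and the $A$-estimates with a radial slot (as well as the sharp bound on $V$, via $V=(W-rD_{\partial_r}W)^{\top_\Sigma}$) are consequences of it, not of the sharp radial bounds on $W$ alone.
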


\subsection{Mapping properties between weighted H\"older spaces} 
Next, we define some relevant weighted H\"older spaces of normal sections as in \cite{CS}.
The spaces defined in \cite{CS} were constructed based on the ones in \cite{KKM}.
We will then mention a Schauder-type estimate for the stability operator for the self-shrinker equation.
We will follow our notations in the previous subsection.

From now on, we fix a cutoff function $\chi : [0,\infty) \to [0,1]$ so that $\supp \chi \sbst [R,\infty)$, $\chi \equiv 1$ in $[2R,\infty)$, and $|\nabla^{j} \chi| \leq C R^{-j}$ for $j\geq 1$ and $C$ independent of $R$ sufficiently large. 
This allows us to define the primary H\"older spaces.

\begin{Def}
Let $\gamma\in\bb R$ and $\alpha\in(0,1).$

\noindent (1) (Homogeneously weighted H\"older spaces)
For a normal vector field $V$ over $\Sigma,$ we define a norm 
$$
\Vert V \Vert_{0;-\gamma}^{\text{hom}} : = \sup_{x\in\Sigma} \tilde r(x)^{\gamma} |V(x)|
$$
and a semi-norm
$$
[ V\, ]_{\alpha;-\gamma-\alpha}^{\text{hom}} : = \sup_{x,y\in \Sigma} \frac{1}{\tilde r(x)^{-\gamma-\alpha} + \tilde r(y)^{-\gamma-\alpha}} \frac{|V(x)-\mathbf{proj}_{N_x\Sigma}V(y)|}{|x-y|^{\alpha}}.$$
Then we let $C^{0,\alpha}_{\text{hom},-\gamma}(\Sigma)$
to be the set of normal vector fields $V$ such that
$$\Vert V \Vert_{0,\alpha;-\gamma}^{\text{hom}} : = \Vert V \Vert_{0;-\gamma}^{\text{hom}} + [ V ]_{\alpha;-\gamma-\alpha}^{\text{hom}}< \infty. $$
Similarly, we define $C^{2,\alpha}_{\text{hom},-\gamma}(\Sigma)$
to be the set of $V$ such that
$$\Vert V \Vert_{2,\alpha;-\gamma}^{\text{hom}} = \sum_{j=0}^{2} \Vert (\nabla_{\Sigma})^{(j)}V \Vert_{0,\alpha;-\gamma}^{\text{hom}}<\infty.$$
(2) (Anisotropically weighted H\"older spaces) We define $C^{2,\alpha}_{\text{an},-1}(\Sigma)$
to be the space of 
$V \in C^{2,\alpha}_{\text{hom},-1}(\Sigma)$ such that 
$$\Vert V \Vert_{2,\alpha;-1}^{\text{an}} : = \Vert V \Vert_{2,\alpha;-1}^{\text{hom}} + \Vert \vec{x}\cdot \nabla_{\Sigma} V \Vert_{0,\alpha;-1}^{\text{hom}}<\infty.$$

\noindent (3) (Cone H\"older spaces) We define 
$\CS 0 : = C^{0,\alpha}_{\textrm{hom},-1}(\Sigma)$ 
and
$$\CS 2 : = C^{2,\alpha}(\Gamma) \times C^{2,\alpha}_{\text{an},-1}(\Sigma).$$
An element 
$(v,V) \in \cC\cS_{-1}^{2,\alpha}(\Sigma)$ 
will be considered as a section on $\Sigma$ given by
$$U = U_{(v,V)}(r,\omega) = \chi(r) (v(\omega))^\perp r + V(r,\omega)$$
for $r\ge R$, and $u=V$ otherwise. 
We will write $U=U_{(v,V)}$ if everything is clear. 
Finally, we define the norm
$$\| U \|_{\CS 2} : = \inf_{U = \chi(r) (v(\omega))^\perp r + V(r,\omega)}\Vert v \Vert_{C^{2,\alpha}(\Gamma)} + \Vert V \Vert_{2,\alpha;-1}^{\text{an}}.$$
\end{Def}

Next, we will prove a Schauder-type estimate for the operator $\cL^\perp_{\frac 12},$ defined by
$$\cL^\perp_{\frac12} V  : = \Delta_{\Sigma}^\perp V - \frac 12 \cdot \np_{x^T} V + \frac 12 V.$$ 
An important ingredient is a vector-valued version of Brandt's non-standard interior Schauder estimate.

\begin{Pro}\label{intSch}
	Suppose $B_2\in \bb R^n$ and there are $C^{0,\alpha}$ functions $a_{ij},b_i^{kl},c^{kl},u^k,f\colon B_2\times [-2,0]\to\bb R$ such that
	$$\frac{\bd}{\bd t}u^k
	= a_{ij} D_{ij}^2 u^k
	+ b_i^{kl} D_iu^l
	+ c^{kl}u^l
	+ f
	$$
	for all $k=1,\cdots,n.$ If the system is uniformly parabolic in the sense that
	$$a_{ij}\xi_i\xi_j\ge \lambda|\xi|^2$$
	for some $\lambda>0$ and 
	$$\sup_{t\in[-2,0]}
	\pr{
		\|a_{ij}(\cdot,t)\|_{C^{0,\alpha}(B_1)}
		+ \|b_{i}^{kl}(\cdot,t)\|_{C^{0,\alpha}(B_1)}
		+ \|c^{kl}(\cdot,t)\|_{C^{0,\alpha}(B_1)}
	}
	\le \Lambda$$
	for some $\Lambda>0,$ then for $T\in (-1,0],$
	$$
	\sup_{t\in[-1,T]} \|u(\cdot,t)\|_{C^{2,\alpha}(B_1)}
	\le C\sup_{t\in[-2,T]} \pr{
		\|u(\cdot,t)\|_{C^{0,\alpha}(B_2)}
		+ \|f(\cdot,t)\|_{C^{0,\alpha}(B_2)}
	}
	$$
	for some $C=C(n,\alpha,\lambda,\Lambda)$ where $u=(u^1,\cdots,u^m).$
\end{Pro}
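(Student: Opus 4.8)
The plan is to exploit the fact that the system is only weakly coupled: the second-order part $a_{ij}D_{ij}^2$ acts on each component $u^k$ with the \emph{same} coefficients, so that every coupling between components occurs through the first- and zeroth-order terms $b_i^{kl}D_iu^l + c^{kl}u^l$. First I would rewrite the equation as a family of scalar equations
\[
\bd_t u^k - a_{ij}D_{ij}^2 u^k = g^k, \qquad g^k := b_i^{kl}D_iu^l + c^{kl}u^l + f,
\]
and, for each fixed $k$, apply the classical interior parabolic Schauder estimate — in the sharp form due to Brandt, which bounds the $C^{2,\alpha}$ norm of a solution on a smaller parabolic cylinder by the $C^0$ norm of the solution and the $C^{0,\alpha}$ norm of the right-hand side on a larger one. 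Because the $a_{ij}$ satisfy $a_{ij}\xi_i\xi_j\ge\lambda|\xi|^2$ and $\|a_{ij}(\cdot,t)\|_{C^{0,\alpha}}\le\Lambda$ uniformly in $t$, the constant produced by this scalar estimate depends only on $n,\alpha,\lambda,\Lambda$.

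Next I would estimate the forcing term $g^k$. Since $C^{0,\alpha}$ is a Banach algebra up to a dimensional factor, and using the coefficient bounds $\|b_i^{kl}(\cdot,t)\|_{C^{0,\alpha}},\ \|c^{kl}(\cdot,t)\|_{C^{0,\alpha}}\le\Lambda$, one gets pointwise in time
\[
\|g^k(\cdot,t)\|_{C^{0,\alpha}} \le \|f(\cdot,t)\|_{C^{0,\alpha}} + C(n)\,\Lambda\,\|u(\cdot,t)\|_{C^{1,\alpha}}.
\]
Feeding this into the scalar Schauder estimate produces a $\|u\|_{C^{1,\alpha}}$ term on the right-hand side, which is of lower order than the $\|u\|_{C^{2,\alpha}}$ we want to bound and therefore must be absorbed.

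The absorption is carried out by parabolic interpolation: for every $\varepsilon>0$ there is $C_\varepsilon$ with $\|u\|_{C^{1,\alpha}(Q)}\le\varepsilon\|u\|_{C^{2,\alpha}(Q)}+C_\varepsilon\|u\|_{C^0(Q)}$ on any parabolic cylinder $Q$, and one then runs the standard iteration over a nested sequence of shrinking cylinders (the usual ``$\varepsilon$-absorption on a decreasing family of domains'' device, adapted from the elliptic setting of Gilbarg–Trudinger to the parabolic one) so that the $C^{2,\alpha}$ term on the right is absorbed into the left with the constant still depending only on $n,\alpha,\lambda,\Lambda$. Finally I would cover $B_1\times[-1,T]$ by finitely many small parabolic cylinders, each contained in $B_2\times[-2,T]$ and each reaching backward in time, which is exactly why the supremum on the right is taken over $[-2,T]$ rather than $[-1,T]$, and sum the local estimates; the number of cylinders and the resulting power of $\Lambda$ only enter the final constant. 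The stated $\|u\|_{C^{0,\alpha}}$ on the right-hand side then follows a fortiori from $\|u\|_{C^0}$. The a priori finiteness of $\|u\|_{C^{2,\alpha}}$ on compact subsets, which legitimizes the interpolation step, is supplied by local parabolic regularity applied to each scalar component $u^k$.

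I expect the main obstacle to be purely one of bookkeeping rather than of ideas: one must check that none of the three operations — the scaling in the scalar Schauder estimate on the small cylinders, the interpolation constant $C_\varepsilon$ chosen for the absorption, and the finite covering — introduces dependence on anything beyond $n,\alpha,\lambda,\Lambda$, and in particular no dependence on the modulus of continuity of $u$ itself. The weak-coupling structure is essential here: if the principal part mixed the components, the reduction to scalar equations would break down and one would need genuinely vector-valued potential-theoretic Schauder theory; as it is, the only system-level input is the $C^{0,\alpha}$ algebra estimate for $g^k$, and everything else is reduced to the scalar case.
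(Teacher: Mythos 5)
Your proposal is correct and follows essentially the same route as the paper: both reduce the system to scalar equations by moving the coupling terms $b_i^{kl}D_iu^l + c^{kl}u^l$ to the right-hand side, apply Brandt's non-standard interior parabolic Schauder estimate to each component, and absorb the resulting lower-order terms by interpolation. The paper compresses the absorption step into a one-line ``rearrangement'' via the inequality $\|h\|_{C^2(B_1)}\le \varepsilon [D^2 h]_{\alpha, B_2}+C_1(\alpha)\varepsilon^{-C_2(\alpha)}\|h\|_{C^0(B_2)}$, whereas you spell out the shrinking-cylinder iteration explicitly, but the argument is the same.
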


\begin{proof}
	By the non-standard Schauder estimate for parabolic equations established in \cite{Brandt} (see also \cite[Theorem 3.6]{CS}), for any $k$ and $t\in [-2,0],$ we have
	$$\|u^k(\cdot,t)\|_{C^{2,\alpha}(B_1)}
	\le \|u^k(\cdot,t)\|_{C^{0,\alpha}(B_2)}
	+ \|b_i^{kl}(\cdot,t) D_iu^l(\cdot,t)
	+ c^{kl}(\cdot,t) u^l(\cdot,t)
	+ f(\cdot,t)\|_{C^{0,\alpha}(B_2)}.
	$$
	Then the proposition follows by rearrangement after combining this and the standard estimate
	$$\|h\|_{C^2(B_1)}
	\le \varepsilon [D^2 h]_{\alpha, B_2}
	+ C_1(\alpha)\varepsilon^{-C_2(\alpha)}\|h\|_{C^0(B_2)}$$
	for any $\varepsilon>0$ and any $C^{2,\alpha}$ function $h\colon B_2\to\bb R.$
\end{proof}

We would like to use this estimate and the diffeomorphism generated by the position vector field to obtain the Schauder-type estimate we would like.
Similar as in \cite{CS}, we consider the time dependent vector field $X_t=\frac{1}{-2t}x^T$ and define $\Phi_t:\Sigma\to\Sigma$ as $\frac{\partial}{\partial t}\Phi_t=X_t\circ \Phi_t$ with $\Phi_{-1}=Id$ with $\hat{g_t}=(-t)\Phi_t^*g_\Sigma$. 
Then we know that if $F:\Sigma\to \bb R^{n+k}$ is the embedding of $\Sigma$ in $\bb R^{n+k}$, $\hat{F_t}=\sqrt{-t}(F\circ \Phi_t):\Sigma\to\bb R^{n+1}$ is a MCF and $\hat{g_t}=\hat{F_t}^*g_{\bb R^{n+k}}$.

We can parameterize the end of $\Sigma$ using $\cC\setminus B_r(0)$ as we did before. 
We denote the parametrization as $F:\cC\setminus B_r(0),\,p\to p+W(p)$. 
Then we define
\begin{align*}
    \tilde{\Phi_t}=F^{-1}\circ \Phi_t\circ F
\end{align*}
and 
\begin{align*}
     \phi_t:(R,\infty)\times \Gamma\to (R,\infty)\times \Gamma,\,(r,\omega)\mapsto ((-t)^{-1/2}r,\omega)
     \text{ for }t\in [-1,0).
\end{align*}
Similarly as in \cite{CS}, we know that for $t\in[-1,0)$ and $r$ sufficiently large,
\begin{align*}
d_{g_{\cC}}(\tilde{\Phi_t}(t,\theta),\phi_t(r,\theta))\lesssim\frac{1}{\sqrt{-t}\cdot r},
\end{align*}
and in the flat cylindrical metric $dr^2+g_\Gamma,$ we have 
\begin{align*}
|D^{(j)}(\tilde{\Phi_t}-\phi_t)|(r,\theta)\lesssim\frac{1}{\sqrt{-t}\cdot r^{1+j-\eta}}.
\end{align*} 
 
In the higher-codimensional case, the stability operator for the self-shrinker equation is\footnote{One can see \cite{ALW}, \cite{AS} or \cite{LL} for the higher-codimensional generalization of the one defined in \cite{CM12}.} 
\begin{align*}
	L  &= \Delta^\perp_\Sigma   
 - \frac{1}{2}\np_{x^T}  + \frac{1}{2} + \sum_{k,\ell}  \,  \langle  \cdot   , A_{k \ell}  \rangle \, A_{ k \ell} = \cL^\perp_{\frac{1}{2}}+ \sum_{k,\ell}  \,  \langle  \cdot   , A_{k \ell}  \rangle \, A_{ k \ell} \, .
\end{align*}
Then for a normal section $V$ such that $\cL^\perp_{\frac{1}{2}}V+\sum_{k,\ell}  \,  \langle  V   , A_{k \ell}  \rangle \, A_{ k \ell}=E$, if we define $a\colon N\Sigma\to N\Sigma$ by $a(V)=\sum_{k,\ell}  \,  \langle V, A_{k \ell}  \rangle \, A_{ k \ell}$, we would know that $\|a\|_{C^{0,\alpha}(B_1(x))}=O(|x|^{-2})$. 
Following this, consider
\begin{align*}
\hat{V}(x,t)=\sqrt{-t}\,V(\Phi_t(x)),\,\,\,\hat{E}(x,t)=\frac{1}{\sqrt{-t}}E(\Phi_t(x))\text{ and } \hat{a}(x,t)=\frac{1}{-t}a(\Phi_t(x)).
\end{align*}
Then we have 
\begin{align*}
\frac{\partial \hat{V}}{\partial t}-\Delta^\perp_{\hat{g_t}}\hat{V}-\hat{a}(\hat{V})=\hat{E}.
\end{align*}
Using these, we have the corresponding claim in higher codimension. 
\begin{Pro}\label{Sc}
Consider $a\colon N\Sigma \to N\Sigma$ with $\Vert a \Vert_{C^{0,\alpha}(B_{1}(x))} = O(|x|^{-2})$ for $x\in\Sigma$ with $|x|\to\infty$, i.e., $a \in C^{0,\alpha}_{\textnormal{hom};-2}(\Sigma)$. 
There exists $C=C(\Sigma,a)$ such that if $V \in C^{2,\alpha}_{\textnormal{loc}}(\Sigma) \cap C^{0}_{\textnormal{hom};+1}(\Sigma)$ satisfies $\cL^\perp_{\frac 12} V + aV  \in \CS 0$, then $V\in \CS 2$ and we have the estimate
\[
\Vert V \Vert_{\cC\cS_{-1}^{2,\alpha}(\Sigma)} \leq C\left( \Vert V \Vert_{C^{0}_{\textnormal{hom},+1}(\Sigma)} + \Vert \cL^\perp_{\frac 12} V + aV \Vert_{\cC\cS_{-1}^{0,\alpha}(\Sigma)} \right).
\] 
\end{Pro}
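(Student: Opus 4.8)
The strategy is to transfer the static elliptic estimate into a parabolic one via the diffeomorphisms $\Phi_t$ generated by the position vector field, apply the non-standard interior Schauder estimate of Proposition \ref{intSch} uniformly on a family of unit balls exhausting the end of $\Sigma$, and then rescale back, exactly mirroring the hypersurface argument of Chodosh-Schulze but keeping track of the extra curvature term $aV = \sum_{k,\ell}\langle V, A_{k\ell}\rangle A_{k\ell}$. First I would set $E := \cL^\perp_{\frac12} V + aV \in \CS 0$ and recall from the discussion preceding the statement that $\hat V(x,t) = \sqrt{-t}\,V(\Phi_t(x))$, $\hat E(x,t) = \tfrac{1}{\sqrt{-t}} E(\Phi_t(x))$, $\hat a(x,t) = \tfrac{1}{-t} a(\Phi_t(x))$ satisfy the parabolic system $\partial_t \hat V - \Delta^\perp_{\hat g_t}\hat V - \hat a(\hat V) = \hat E$ on $\Sigma \times [-1,0)$. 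Expressing $\Delta^\perp_{\hat g_t}$ in a local orthonormal frame for the normal bundle, this becomes a system of the form treated in Proposition \ref{intSch}, with the coefficients $a_{ij}$ built from $\hat g_t$ and lower-order coefficients $b_i^{kl}, c^{kl}$ absorbing the Christoffel symbols, the connection coefficients of $N\Sigma$, and $\hat a$.

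The key point is to run this on a family of balls. For $x_0 \in \Sigma$ with $|x_0| =: \rho$ large, I would work in the rescaled picture on $B_1 = B_1(x_0/\rho) \subset \rho^{-1}\Sigma$, which under the evolution corresponds to looking at the flow near the point $\Phi_t(x_0)$ at unit scale; by Lemma \ref{sff} and Corollary \ref{sharp-metric} the rescaled metrics $\hat g_t$ are uniformly $C^{0,\alpha}$-close to the flat metric and uniformly parabolic with constants $\lambda, \Lambda$ independent of $\rho$, and the rescaled lower-order coefficients — here the scaling of $\hat a$ by $(-t)^{-1}$ together with $\|a\|_{C^{0,\alpha}(B_1(x))} = O(|x|^{-2})$ is exactly what makes $\hat a$ bounded after rescaling — are uniformly bounded. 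Proposition \ref{intSch} then gives
\[
\sup_{t\in[-1,T]} \|\hat V(\cdot,t)\|_{C^{2,\alpha}(B_1)} \le C \sup_{t\in[-2,T]} \pr{ \|\hat V(\cdot,t)\|_{C^{0,\alpha}(B_2)} + \|\hat E(\cdot,t)\|_{C^{0,\alpha}(B_2)} }
\]
with $C$ independent of $x_0$. Undoing the rescaling and the time-$t$ diffeomorphism, and using that $V \in C^0_{\textnormal{hom};+1}(\Sigma)$ controls the right-hand side (the $\hat E$ term is controlled by $\|E\|_{\CS 0}$ via the definition of the cone norm, since on the conical end $E = \chi (v(\omega))^\perp r + (\text{lower order})$ and the radial dilation $\phi_t$ acts on the $rv(\omega)$ piece in a controlled way), one obtains the homogeneous bound $\|V\|_{2,\alpha;-1}^{\textnormal{hom}} \lesssim \|V\|_{C^0_{\textnormal{hom},+1}} + \|E\|_{\CS 0}$. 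To upgrade to the full $\CS 2$ norm I must additionally bound the anisotropic piece $\|\vec x\cdot\nabla_\Sigma V\|_{0,\alpha;-1}^{\textnormal{hom}}$ and extract the boundary datum $v \in C^{2,\alpha}(\Gamma)$; for the former I would differentiate the equation in the radial direction (equivalently, use that $\partial_t$ of $\hat V$ is controlled by the parabolic estimate, and translate $\partial_t$ back into $\vec x\cdot\nabla_\Sigma$ via the definition of $X_t$), and for the latter I would use Lemma \ref{normal} — specifically the relation $\vec x\cdot\nabla_\Sigma f = r(\partial_r f)^\perp + \alpha_3\cdot\nabla_{g_\cC} f$ with $|\alpha_3| = O(r^{-1})$ — to see that $r^{-1} V$ has a limit along $\Gamma$ in $C^{2,\alpha}$, which is the section $v$, with $\|v\|_{C^{2,\alpha}(\Gamma)}$ bounded by the same right-hand side.

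The main obstacle is the bookkeeping at the conical end: verifying that all the rescaled coefficients — in particular the ones coming from the normal connection of $\Sigma$ (absent or simpler in the hypersurface case) and from $\hat a$ — are genuinely uniformly bounded in $C^{0,\alpha}$ on unit balls as $|x_0|\to\infty$, and that the decomposition $U = \chi(r)(v(\omega))^\perp r + V$ defining the cone space interacts correctly with the radial dilation $\phi_t$ so that the anisotropic term on the right-hand side does not blow up. This is where the sharp estimates of Lemma \ref{sff}, Lemma \ref{graph}, Corollary \ref{sharp-metric} and Lemma \ref{normal}, together with the hypothesis $a \in C^{0,\alpha}_{\textnormal{hom};-2}(\Sigma)$, are all needed; once uniform parabolicity and uniform coefficient bounds are in hand, Proposition \ref{intSch} does the rest and the argument is the same as \cite[\S3]{CS}.
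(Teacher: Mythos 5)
Your plan follows the paper's proof essentially verbatim: the parabolic reformulation via $\Phi_t$, the uniform application of Proposition \ref{intSch} on unit balls with the decay of $a$ guaranteeing bounded rescaled coefficients, and the recovery of the anisotropic term and the cone datum $v$ from the equation itself together with Lemma \ref{normal}. The paper records your ``translate $\partial_t$ back into $\vec x\cdot\nabla_\Sigma$'' step as the explicit identity $r(\partial_r V)^\perp - V = -2E + 2\Delta^\perp V + 2a(V) - (\vec x\cdot\nabla^\perp_\Sigma V - r(\partial_r V)^\perp)$ (no radial differentiation of the equation is needed --- one just solves for the radial derivative), but this is the same argument.
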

 
The proof is essentially the same as in the codimension one case based on the discussion above and Proposition \ref{intSch}.
Note that in the higher-codimensional case, we set 
\begin{align*}
     W:=
     r(\partial_r V)^\perp-V
     =-2E+2\Delta^\perp V+2a(V)-(\vec{x}\cdot \np_{\Sigma} V-r(\partial_rV)^\perp)
\end{align*}
and the estimate comes from Lemma \ref{normal}.

\section{\bf Estimates on weighted spaces}\label{sec:2}
In this section, we establish some linear estimates on weighted Sobolev spaces, extending those in the hypersurface case in \cite{CS} to the higher-codimensional case.

Let $\rho(x):=(4\pi)^{-n/2} e^{-|x|^2/4}$ be the $n$-dimensional backward heat kernel. 
For any normal vector field or any tensor $V$ over $\Sigma,$ we define 
$$\|V\|_{L^2_W(\Sigma)}^2
:= \int_\Sigma |V|_g^2 \cdot \rho~dx,$$
and let $L^2_W$ be the space of normal vector field $V$ over $\Sigma$ such that $\|V\|_{L^2_W(\Sigma)}^2<\infty.$
Using this, we consider 
$$\|V\|_{H^k_W(\Sigma)}^2
:= \sum_{i=0}^k \|(\np)^i V\|_{L^2_W}^2,$$
and let $H^k_W(\Sigma)$ be the associated Sobolev space of normal vector fields.
When it is clear, we will omit the symbol $\Sigma$ and just write $\|V\|_{L^2_W}$ and $\|V\|_{H^k_W}$

The following Sobolev inequality was due to Ecker \cite{E00} in the hypersurface case.
\begin{Pro}\label{ecker}
	For $V\in H^1_W(\Sigma),$ we have
	$$\int_\Sigma |V|^2\cdot |x|^2 \rho~ dx
	\le 4\int_\Sigma \pr{n|V|^2 + 4|\np V|^2} \rho~ dx.$$
\end{Pro}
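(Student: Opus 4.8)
\textit{Approach.} The plan is to obtain this inequality as a one-line consequence of a single weighted integration-by-parts identity on $\Sigma$, in the spirit of Ecker's argument, using only the self-shrinker equation $\mathbf{H}=-\tfrac12 x^\perp$, the structure of the Gaussian weight, and the Cauchy--Schwarz inequality.

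\textit{The key identity.} First I would record the two elementary facts that $\nabla_\Sigma\rho = -\tfrac12\,x^T\rho$ and, from the self-shrinker equation, $\operatorname{div}_\Sigma x^T = \Delta_\Sigma\tfrac{|x|^2}{2} = n+\langle x,\mathbf{H}\rangle = n-\tfrac12|x^\perp|^2$. Then for any nonnegative compactly supported $\phi\in W^{1,1}(\Sigma)$, integrating $\operatorname{div}_\Sigma(\phi\,x^T\rho)=0$ over $\Sigma$ and using $|x|^2=|x^T|^2+|x^\perp|^2$ gives, after rearrangement,
\begin{align*}
\int_\Sigma \phi\,|x|^2\,\rho\,dx = 2\int_\Sigma \langle \nabla_\Sigma\phi, x^T\rangle\,\rho\,dx + 2n\int_\Sigma \phi\,\rho\,dx .
\end{align*}
Note that the two $|x^\perp|^2$ contributions (one from $\operatorname{div}_\Sigma x^T$, one from $\langle x^T,\nabla_\Sigma\rho\rangle$) combine precisely to promote $|x^T|^2$ to $|x|^2$ on the left-hand side; this is exactly where the sign convention $\mathbf{H}=-\tfrac12 x^\perp$ is used.

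\textit{Deriving the inequality.} Next I would apply the identity with $\phi=|V|^2$. Since $V$ is a normal section, $\langle\nabla_\Sigma|V|^2,e_i\rangle = 2\langle\np_{e_i}V,V\rangle$, so the middle term becomes $4\int_\Sigma\langle\np_{x^T}V,V\rangle\rho\,dx$, and by Cauchy--Schwarz, $|x^T|\le|x|$, and Young's inequality $4ab\le\tfrac12 a^2+8b^2$ with $a=|x|\,|V|$, $b=|\np V|$,
\begin{align*}
4\,\big|\langle\np_{x^T}V,V\rangle\big| \le 4\,|x|\,|\np V|\,|V| \le \tfrac12\,|x|^2|V|^2 + 8\,|\np V|^2 .
\end{align*}
Plugging this into the identity and absorbing $\tfrac12\int_\Sigma|x|^2|V|^2\rho$ into the left-hand side yields $\int_\Sigma|V|^2|x|^2\rho\,dx \le 16\int_\Sigma|\np V|^2\rho\,dx + 4n\int_\Sigma|V|^2\rho\,dx$, which is exactly the claimed inequality.

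\textit{Main obstacle.} The only point needing genuine care — the step I expect to be the main, if minor, obstacle — is justifying the integration by parts (no boundary term) and the finiteness of all integrals for an arbitrary $V\in H^1_W(\Sigma)$, not merely a compactly supported one. I would handle this by running the computation with $\phi=\eta_R^2|V|^2$ for a standard cutoff $\eta_R$ (equal to $1$ on $B_R$, supported in $B_{2R}$, $|\nabla\eta_R|\le C/R$). The extra term in which the derivative falls on $\eta_R$ is bounded by $\tfrac{C}{R}\int_{B_{2R}\setminus B_R}|V|^2|x|\,\rho\,dx \le C\int_{B_{2R}\setminus B_R}|V|^2\rho\,dx$, using $|x|\le 2R$ on $\operatorname{supp}\nabla\eta_R$; this tends to $0$ as $R\to\infty$ since $V\in L^2_W(\Sigma)$. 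Letting $R\to\infty$ and using monotone convergence recovers the inequality and, in particular, shows $\int_\Sigma|V|^2|x|^2\rho<\infty$. This truncation argument is clean here because self-shrinkers — a fortiori asymptotically conical ones — have Euclidean volume growth, so the Gaussian weight makes every integral above absolutely convergent.
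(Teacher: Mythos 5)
Your proposal is correct and is essentially the paper's own argument: both integrate the divergence of $|V|^2\rho\,x$ (you use its tangential part plus $\operatorname{div}_\Sigma x^\perp=-\langle x^\perp,\mathbf H\rangle$, the paper uses the first variation formula directly), invoke $\mathbf H=-\tfrac12 x^\perp$ to assemble $|x|^2$, and conclude by Cauchy--Schwarz/Young with absorption, arriving at the same constants. Your cutoff argument just spells out the approximation step the paper leaves implicit.
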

\begin{proof}
	Consider a smooth normal vector field $V$ over $\Sigma$ with compact support. Then the first variation formula implies
	\begin{align*}
	-\int_\Sigma \pair{|V|^2\rho(x) x,\h}dx
	& = \int_\Sigma \text{div} \pr{|V|^2\rho(x)x}dx\\
	& = \int_\Sigma \pr{
	n|V|^2
	+ \pair{\n|V|^2,x}
	- \frac 12|V|^2\cdot |x^T|^2
	}\rho~dx.
	\end{align*}
	Since $\Sigma$ is a self-shrinker (that is, $\h=-\frac{x^\perp}{2}$), we derive
	\begin{align*}
	\frac 12 \int_\Sigma |V|^2\cdot |x|^2 \rho~dx
	& =  \int_\Sigma \pr{
		n|V|^2
		+ 2|V|\pair{\n|V|,x}
	}\rho~dx\\
	& \le \int_\Sigma \pr{
		n|V|^2
		+ 4|\np V|^2 + \frac 14 |x|^2|V|^2
	}\rho~dx
	\end{align*}
	by the Cauchy-Schwarz inequality and the Kato inequality. 
	Then the inequality for $V$ follows after rearrangement. 
	This implies the inequality for $V\in H^1_W(\Sigma)$ using an approximation argument.
\end{proof}

\begin{Lem}\label{DV}
	For $V\in H^2_W(\Sigma),$ we have
	$\|\np V\|_{L^2_W}^2 \le \|\dlp V\|_{L^2_W} \|V\|_{L^2_W}.$
    In particular, the inclusion $H^1_W(\Sigma)\hookrightarrow L^2_W(\Sigma)$ is compact.
\end{Lem}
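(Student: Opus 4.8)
The plan is to prove the two assertions separately, both resting on integration by parts against the Gaussian weight $\rho$.

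\emph{The inequality.} The key observation is that the drift Laplacian $\dlp = \Delta_\Sigma^\perp - \tfrac12\np_{x^T}$ is formally self-adjoint for the weighted inner product $\langle U,V\rangle_{L^2_W}:=\int_\Sigma\langle U,V\rangle\,\rho\,dx$. Since $\nabla_\Sigma\rho=-\tfrac12 x^T\rho$ on $\Sigma$, one has for a normal section $V$ the pointwise identity
\[
\operatorname{div}_\Sigma\!\big(\rho\,\langle\np V,V\rangle\big)=\rho\,\langle\dlp V,V\rangle+\rho\,|\np V|^2,
\]
where $\langle\np V,V\rangle$ is the $1$-form $X\mapsto\langle\np_XV,V\rangle$. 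First I would integrate this against $\psi_R^2$, with $\psi_R(x)=\psi(|x|/R)$ for a fixed cutoff $\psi$ equal to $1$ on $[0,1]$ and $0$ on $[2,\infty)$ (so the $\psi_R$ are nested and $|\nabla\psi_R|\le C/R$); the divergence theorem yields
\[
\int_\Sigma\psi_R^2\,\rho\,|\np V|^2\,dx=-\int_\Sigma\psi_R^2\,\rho\,\langle\dlp V,V\rangle\,dx-2\int_\Sigma\psi_R\,\rho\,\langle\np_{\nabla\psi_R}V,V\rangle\,dx.
\]
Letting $R\to\infty$: the last integral is $O(R^{-1})\|\np V\|_{L^2_W}\|V\|_{L^2_W}\to 0$; the left-hand side increases to $\|\np V\|_{L^2_W}^2$ by monotone convergence; and the first integral on the right tends to $-\langle\dlp V,V\rangle_{L^2_W}$ by dominated convergence, which is legitimate because $\dlp V\in L^2_W$. (The only term of $\dlp V$ not manifestly in $L^2_W$ is $\np_{x^T}V$, and $|x^T|\,|\np V|\le|x|\,|\np V|\in L^2_W$ follows by applying Proposition \ref{ecker} to the tensor $\np V$, using $V\in H^2_W$; and if $\dlp V\notin L^2_W$ the asserted inequality is trivially true.) Hence $\|\np V\|_{L^2_W}^2=-\langle\dlp V,V\rangle_{L^2_W}\le\|\dlp V\|_{L^2_W}\|V\|_{L^2_W}$ by Cauchy--Schwarz.

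\emph{Compactness.} Let $\{V_i\}$ be bounded in $H^1_W(\Sigma)$. Since $\Sigma$ is properly embedded, each $\Sigma\cap\overline{B_R}$ is compact, and on it $\rho$ and the induced metric are comparable to fixed smooth data, so $\{V_i\}$ is bounded in the ordinary $H^1(\Sigma\cap\overline{B_R})$. By Rellich--Kondrachov and a diagonal argument over an exhaustion $R_m\uparrow\infty$, I would pass to a subsequence (not relabeled) that converges in $L^2(\Sigma\cap\overline{B_{R_m}})$, hence in $L^2_W(\Sigma\cap\overline{B_{R_m}})$, for every $m$. To upgrade this to convergence in $L^2_W(\Sigma)$ one needs a uniform control of the mass near infinity, which is exactly what Proposition \ref{ecker} supplies: $\int_\Sigma|V_i|^2|x|^2\rho\,dx\le 4\int_\Sigma\big(n|V_i|^2+4|\np V_i|^2\big)\rho\,dx\le C_0$ for a constant $C_0$ independent of $i$, so $\int_{\Sigma\setminus B_R}|V_i|^2\rho\,dx\le C_0R^{-2}$ for all $R>0$ and $i$. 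Given $\epsilon>0$, I would first fix $R$ with $4C_0R^{-2}<\epsilon/2$, so that $\int_{\Sigma\setminus B_R}|V_i-V_j|^2\rho\,dx<\epsilon/2$ for all $i,j$; then, $R$ being fixed, the interior convergence gives $\int_{\Sigma\cap\overline{B_R}}|V_i-V_j|^2\rho\,dx<\epsilon/2$ for $i,j$ large. Thus $\{V_i\}$ is Cauchy in $L^2_W(\Sigma)$, proving that $H^1_W(\Sigma)\hookrightarrow L^2_W(\Sigma)$ is compact.

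\emph{Expected main obstacle.} The computations are routine; the two points deserving care are justifying the integration by parts for an arbitrary $V\in H^2_W$ rather than a compactly supported one---handled by the cutoff limit above, with $\np_{x^T}V\in L^2_W$ obtained from Proposition \ref{ecker} applied to $\np V$---and, for the compactness, recognizing that the required uniform decay of $\|V_i\|_{L^2_W}$ outside large balls is precisely Ecker's weighted inequality, the rest being standard interior elliptic compactness on compact pieces of $\Sigma$.
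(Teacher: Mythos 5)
Your proof is correct and follows essentially the same route as the paper's: the divergence identity you integrate is exactly the paper's identity $0=\int_\Sigma \dl|V|^2\,\rho\,dx = 2\int_\Sigma(\langle\dlp V,V\rangle+|\np V|^2)\rho\,dx$, followed by Cauchy--Schwarz, with your cutoff-and-limit argument playing the role of the paper's approximation by compactly supported sections (and your use of Proposition \ref{ecker} applied to $\np V$ to justify the limiting integration by parts is a nice explicit touch). The compactness argument you spell out (Rellich on compact pieces plus the uniform tail bound from Ecker's inequality) is precisely the argument of \cite[Lemma 3.12]{CS} that the paper cites.
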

\begin{proof}
	As above, it suffices to prove the inequality for smooth $V$ with compact support. Using integration by parts, we have
	\begin{align*}
	0 
	& = \int_\Sigma \dl |V|^2 \rho~dx\\
	& = 2\int_\Sigma\pr{
	\pair{\D^\perp V,V}
	+ |\np V|^2
	- \frac 12 \pair{\np_{x^T}V,V}
	} \rho~dx\\
	& = 2\int_\Sigma \pr{
	\pair{\dlp V,V}
	+ |\np V|^2
	} \rho~dx.
	\end{align*}
	As a result, the first conclusion follows from the Cauchy-Schwarz inequality. The same argument in \cite[lemma 3.12]{CS} implies the second conclusion.
\end{proof}


\begin{Pro}\label{DDV}
	For $V\in H^2_W(\Sigma),$ we have
	$$\|V\|_{H^2_W}^2 \le C \pr{
	\|\dlp V\|_{L^2_W}^2
	+ \|V\|_{L^2_W}^2
	}
	$$
	for some $C=C(\Sigma).$
\end{Pro}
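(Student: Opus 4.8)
The plan is to bootstrap from the first-order estimate in Lemma~\ref{DV} by commuting the operator $\dlp$ with derivatives and using a Bochner-type identity on $\Sigma$. First I would establish a weighted Bochner formula for the drift Laplacian $\dlp = \D^\perp - \tfrac12\np_{x^T}$ acting on normal vector fields: integrating $\dl|\np V|^2\,\rho$ against the Gaussian weight and using that $\Sigma$ is a self-shrinker, one obtains after integration by parts an identity of the schematic form
\begin{align*}
\int_\Sigma |\np^2 V|^2\,\rho~dx
&= \int_\Sigma \pair{\dlp V, -\dlp V + (\text{zeroth order})\, V}\,\rho~dx
+ \int_\Sigma (\text{curvature terms})\ast V \ast V\,\rho~dx \\
&\quad + \int_\Sigma (\text{curvature terms})\ast \np V \ast \np V\,\rho~dx,
\end{align*}
where the curvature terms involve $A_\Sigma$, $\nabla A_\Sigma$, $\Rm_\Sigma$, and the normal curvature $R^\perp$ of $N\Sigma$. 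The point is that $\dlp$ does not commute with $\np$, and each commutator produces terms that are linear in curvature and at most first order in $V$.

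The key step is then to control these curvature error terms. On the compact part $\Sigma\cap B_{R_1}$ the curvature is bounded, so the terms are dominated by $\epsilon\|\np^2 V\|_{L^2_W}^2 + C_\epsilon(\|\np V\|_{L^2_W}^2 + \|V\|_{L^2_W}^2)$ via Cauchy--Schwarz and Young's inequality. On the end $\Sigma\setminus B_{R_1}$ I would invoke Lemma~\ref{sff}, which gives $|\nabla^i A|\le C|x|^{-i-1}$, hence $|\Rm_\Sigma|, |R^\perp| = O(|x|^{-2})$ and $|\nabla \Rm_\Sigma| = O(|x|^{-3})$. Thus the curvature error terms on the end carry a factor $|x|^{-2}$, which is far better than what we need: in particular they are bounded by $\int_\Sigma |x|^{-2}(|\np V|^2 + |V|^2)\rho\,dx \le \int_\Sigma(|\np V|^2+|V|^2)\rho\,dx$ directly. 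Combining, we get
\begin{align*}
\|\np^2 V\|_{L^2_W}^2 \le \epsilon \|\np^2 V\|_{L^2_W}^2 + C\pr{\|\dlp V\|_{L^2_W}^2 + \|\np V\|_{L^2_W}^2 + \|V\|_{L^2_W}^2},
\end{align*}
and absorbing the $\epsilon$-term and then applying Lemma~\ref{DV} to bound $\|\np V\|_{L^2_W}^2 \le \tfrac12\|\dlp V\|_{L^2_W}^2 + \tfrac12\|V\|_{L^2_W}^2$ yields the claim with $\|V\|_{H^2_W}^2 = \|V\|_{L^2_W}^2 + \|\np V\|_{L^2_W}^2 + \|\np^2 V\|_{L^2_W}^2$.

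I expect the main obstacle to be bookkeeping in the Bochner computation: in higher codimension one must carefully track both the tangential curvature $\Rm_\Sigma$ and the normal bundle curvature $R^\perp$ when commuting $\D^\perp$ past $\np$, and also handle the drift term $\np_{x^T}$, whose commutator with $\np$ produces a term involving $\np_{\nabla_i x^T}V$ that must be rewritten using $\nabla_i x^T = x^T$-type identities together with the second fundamental form (since $x^T$ is not parallel). A cleaner alternative, which I would pursue if the direct Bochner route gets unwieldy, is to first prove an interior-in-$\Sigma$ elliptic estimate of the form $\|V\|_{H^2_W(\Sigma\cap B_\rho)} \le C(\|\dlp V\|_{L^2_W} + \|V\|_{L^2_W})$ by a standard Caccioppoli/difference-quotient argument with the Gaussian weight absorbed into cutoffs, and separately treat the end using the asymptotically conical structure (Corollary~\ref{sharp-metric} and Lemma~\ref{normal}) where the weight $\rho = (4\pi)^{-n/2}e^{-r^2/4}$ decays so fast that all error terms are trivially controlled. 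Either way the analytic content is mild; the essential inputs are the self-shrinker equation (as used in Lemma~\ref{DV}) and the curvature decay from Lemma~\ref{sff}.
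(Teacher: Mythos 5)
Your proposal matches the paper's proof: the paper likewise computes $\dl|\np V|^2$, commutes derivatives to produce $R^\perp$ and $\n R^\perp$ terms controlled via the Ricci equation and the curvature decay of Lemma \ref{sff}, integrates against the Gaussian weight so that $\int_\Sigma \dl|\np V|^2\rho\,dx=0$ yields $2\|(\np)^2V\|_{L^2_W}^2 = 2\|\dlp V\|_{L^2_W}^2 + O(\|V\|_{L^2_W}^2+\|\np V\|_{L^2_W}^2)$, and then concludes with Lemma \ref{DV}. The only cosmetic difference is that the curvature error terms carry no second-derivative contribution, so your $\epsilon$-absorption step is unnecessary.
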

\begin{proof}
	As above, it suffices to prove the inequality for smooth $V$ with compact support.
	Around a point $p\in \Sigma,$ take normal coordinates $e_1,\cdots,e_n$ such that $\n_{e_i}e_j(p)=0.$ 
    At $p,$ we have
	\begin{align*}
	\D |\np V|^2
	& = \D \pr{\sum_i \pair{\np_{e_i}V,\np_{e_i}V}}\\ 
	& = 2 \sum_{i,k} \pair{\np_{e_k}\np_{e_k}\np_{e_i} V, \np_{e_i}V}
	+ 2 \sum_{i,k} \pair{\np_{e_k}\np_{e_i} V, \np_{e_k}\np_{e_i} V}\\
	& = 2\sum_{i,k}\pair{
	\np_{e_k}\np_{e_i}\np_{e_k} V
	+ \n_{e_k}\pr{R^\perp(e_i,e_k)V},
	\np_{e_i} V
	}
	+ 2|(\np)^2V|^2\\
	& = 2 \sum_{i,k} \pair{
	\np_{e_i}\np_{e_k}\np_{e_k} V
	+ R^\perp(e_i,e_k) \np_{e_k} V
	+ \n_{e_k}\pr{R^\perp(e_i,e_k)V},
	\np_{e_i} V
	}
	+ 2|(\np)^2V|^2\\
	& = 2\pair{\np\D^\perp V,\np V}
	+ 2|(\np)^2V|^2
	+ 4 \pair{R^\perp(e_i,e_k) \np_{e_k} V, \np_{e_i}V}
	+ 2\pair{\pr{\n_{e_k}R^\perp}(e_i,e_k)V,\np_{e_i}V},
	\end{align*}
	where the normal curvature 
	$$R^\perp(X,Y)W
	=\np_Y\np_X W - \np_X\np_Y W + \np_{[X,Y]}W$$
	is defined for any tangent vectors $X,Y$ and any normal vector $W.$ 
    Therefore, using Lemma \ref{sff} and the Ricci equation
	$$\pair{R^\perp(X,Y)W,Z}
	= \sum_{k=1}^n \pr{
	\pair{A(X,e_k),W}\pair{A(Y,e_k),Z}
	- \pair{A(Y,e_k),W}\pair{A(X,e_k),Z}
	},
	$$
	we can estimate
	\begin{equation*}
	\D |\np V|^2
	=  2\pair{\np\D^\perp V,\np V}
	+ 2|(\np)^2V|^2
	+ O(|V|^2+|\np V|^2).
	\end{equation*}
	Therefore, we have
	\begin{align*}
	&~~~~\dl |\np V|^2\\
	& = 2\pair{\np\dlp V,\np V}
	+ 2|(\np)^2V|^2
	- \frac 12\n_{x^T} |\np V|^2
	+ \pair{\np\np_{x^T}V, \np V}
	+ O(|V|^2+|\np V|^2)\\
	& = 2\pair{\np\dlp V,\np V}
	+ 2|(\np)^2V|^2
	+ \sum_i \pr{-\pair{\np_{x^T}\np_{e_i}V,\np_{e_i}V}
	+ \pair{\np_{e_i}\np_{x^T}V, \np{e_i} V}}\\
	& + O(|V|^2+|\np V|^2)\\
	& = 2\pair{\np\dlp V,\np V}
	+ 2|(\np)^2V|^2
	+ \sum_i \pair{R^\perp(x^T,e_i)V,\np_{e_i}V}
	+ O(|V|^2+|\np V|^2)\\
	& = 2\pair{\np\dlp V,\np V}
	+ 2|(\np)^2V|^2
	+ O(|V|^2+|\np V|^2)
	\end{align*}
	using Lemma \ref{DV} and the Ricci equation again. 
	Hence, after integrating over $\Sigma$ and using integration by parts, we get 
	\begin{align*}
	0 
    = -2 \|\dlp V\|_{L^2_W}^2
	+ 2\|(\np)^2V\|_{L^2_W}^2
	+ O\pr{\|V\|_{L^2_W}^2 
    + \|\np V\|_{L^2_W}^2}.
	\end{align*}
	The proposition then follows after combining this with lemma \ref{DV}.
\end{proof}

Now we can derive the existence of weak solutions to the stability operator $L.$ 
To this end, recall that we define
$$LV 
:= \dlp V + \sum_{i,j}\pair{V,A_{ij}} A_{ij} + V
=\D^\perp V - \frac 12 \n_{x^T}^\perp V 
+ \sum_{i,j}\pair{V,A_{ij}} A_{ij} + V
$$
for $V\in N\Sigma$, and
$$B_\gamma(U,V)
= \int_\Sigma \pr{\pair{\np U, \np V}
- \pr{\gamma + 1}\pair{U,V}
- \sum_{i,j} \pair{U,A_{ij}} \pair{V,A_{ij}}
} \rho~ dx,$$
which is the bounded bilinear form associated to $-(L+\gamma).$ 
By taking large enough $\gamma$ such that $\gamma\ge \sup_\Sigma |A|^2 + 2,$ we have
$$\|V\|_{H^2_W}^2
\le \abs{B_\gamma (V,V)}.
$$
Using this estimate, the standard Fredholm alternative, the Lax-Milgram theorem, Lemma \ref{DV} and Proposition \ref{DDV}, we obtain the following result.

\begin{Thm}\label{kerL}
	The space $\Ker L$ of the weak solutions to $LV=0$ is finite-dimensional. Moreover, for $U\in L^2_W(\Sigma),$ $LV=U$ has a weak solution in $H_W^1(\Sigma)$ if and only if $U$ is $L^2_W$-orthogonal to $\Ker L,$ and if $V$ is orthogonal to $\Ker L$ also, then 
    $\|V\|_{H^2_W}\lesssim \|U\|_{L^2_W}.$
\end{Thm}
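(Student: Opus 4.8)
The plan is to deduce this from the coercivity estimate $\|V\|_{H^2_W}^2 \le |B_\gamma(V,V)|$ (valid once $\gamma \ge \sup_\Sigma |A|^2 + 2$) together with the compactness of the inclusion $H^1_W(\Sigma)\hookrightarrow L^2_W(\Sigma)$ from Lemma \ref{DV} and the elliptic regularity estimate from Proposition \ref{DDV}. First I would set up the functional-analytic framework: the bilinear form $B_\gamma$ is bounded on $H^1_W(\Sigma)$ (the $\sum_{i,j}\pair{U,A_{ij}}\pair{V,A_{ij}}$ term is controlled since $|A|$ is bounded on $\Sigma$, being $O(r^{-1})$ at infinity by Lemma \ref{sff}), and the displayed estimate shows $B_\gamma$ is coercive on $H^1_W(\Sigma)$ — note $\|V\|_{H^2_W}^2 \ge \|V\|_{H^1_W}^2$. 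By Lax--Milgram, for each $W \in L^2_W(\Sigma)$ there is a unique $V = T(W) \in H^1_W(\Sigma)$ with $B_\gamma(V,\cdot) = \pair{W,\cdot}_{L^2_W}$, and $T\colon L^2_W \to H^1_W$ is bounded; composing with the compact inclusion $H^1_W \hookrightarrow L^2_W$ gives a compact self-adjoint operator $K = \iota\circ T$ on $L^2_W$ (self-adjointness because $B_\gamma$ is symmetric).

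Next I would translate the equation $LV = U$ into the language of $K$. Writing $-(L+\gamma)V = W$ in the weak sense means $B_\gamma(V,\cdot) = \pair{W,\cdot}_{L^2_W}$, i.e. $V = K(W)$. Then $LV = U$ rewrites, after moving $\gamma V$ to the other side, as $V - \gamma K(V) = -K(U)$, an equation of the form $(\mathrm{Id} - \gamma K)V = -K(U)$ with $\gamma K$ compact. The finite-dimensionality of $\Ker L$ is immediate: $V \in \Ker L$ (weakly) iff $V = \gamma K(V)$, an eigenvalue equation for the compact operator $K$, so the corresponding eigenspace is finite-dimensional; regularity (via Proposition \ref{DDV} applied iteratively, or its bootstrapped version) puts these weak solutions in $H^2_W$, hence the estimate makes sense. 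For the solvability statement, the Fredholm alternative for $\mathrm{Id} - \gamma K$ says $(\mathrm{Id}-\gamma K)V = -K(U)$ is solvable iff $-K(U)$ is orthogonal to $\Ker(\mathrm{Id} - \gamma K^*) = \Ker(\mathrm{Id}-\gamma K) = \Ker L$; since $K$ is self-adjoint and injective-on-its-range considerations show $-K(U) \perp \Ker L \iff U \perp \Ker L$ (using that $K$ preserves $\Ker L$ and is self-adjoint, $\pair{K(U),\phi} = \pair{U,K\phi} = \gamma^{-1}\pair{U,\phi}$ for $\phi \in \Ker L$), we get exactly the stated condition that $U$ be $L^2_W$-orthogonal to $\Ker L$.

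For the final quantitative estimate, suppose $V$ solves $LV = U$ weakly and $V \perp \Ker L$ in $L^2_W$. The operator $\mathrm{Id} - \gamma K$ restricted to the orthogonal complement of $\Ker L$ is injective with closed range, hence bounded below on that subspace, so $\|V\|_{L^2_W} \lesssim \|K(U)\|_{L^2_W} \lesssim \|U\|_{L^2_W}$; alternatively, one argues directly by contradiction using the compactness of $K$ (a non-convergent-quotient sequence would produce, after passing to a subsequence, a nonzero element of $\Ker L$ orthogonal to $\Ker L$). Then one upgrades $L^2_W$ control to $H^2_W$ control: $\dlp V = U - \gamma V - \sum_{i,j}\pair{V,A_{ij}}A_{ij} + (\gamma+1)V$ wait — more simply, $\dlp V = LV - \sum\pair{V,A_{ij}}A_{ij} - V = U - \sum\pair{V,A_{ij}}A_{ij} - V$, so $\|\dlp V\|_{L^2_W} \lesssim \|U\|_{L^2_W} + \|V\|_{L^2_W} \lesssim \|U\|_{L^2_W}$, and Proposition \ref{DDV} gives $\|V\|_{H^2_W}^2 \lesssim \|\dlp V\|_{L^2_W}^2 + \|V\|_{L^2_W}^2 \lesssim \|U\|_{L^2_W}^2$, as desired.

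The main obstacle I anticipate is purely bookkeeping in the Fredholm-alternative step: making sure the self-adjointness of $K$ with respect to the $L^2_W$ inner product is used correctly to identify $\Ker(\mathrm{Id} - \gamma K^*)$ with $\Ker L$, and that the orthogonality condition ``$-K(U) \perp \Ker L$'' genuinely simplifies to ``$U \perp \Ker L$'' — this hinges on $K$ mapping $\Ker L$ bijectively to itself (with eigenvalue $\gamma^{-1}$), which follows from $\Ker L$ being exactly the $\gamma^{-1}$-eigenspace of $K$. Everything else (boundedness and coercivity of $B_\gamma$, the compact inclusion, the regularity estimate) has already been established in the excerpt, so this is the only place requiring genuine care rather than routine citation.
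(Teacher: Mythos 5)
Your proposal is correct and follows exactly the route the paper intends: coercivity of the shifted bilinear form plus Lax--Milgram to build the solution operator, compactness of $H^1_W(\Sigma)\hookrightarrow L^2_W(\Sigma)$ from Lemma \ref{DV} to make it compact and self-adjoint, the Fredholm alternative for $\mathrm{Id}-\gamma K$ to get finite-dimensionality of $\Ker L$ and the orthogonality criterion, and Proposition \ref{DDV} to upgrade the $L^2_W$ bound to $H^2_W$. The paper gives no further detail than citing these ingredients, and your filled-in version (including the reduction of $-K(U)\perp\Ker L$ to $U\perp\Ker L$ via the eigenvalue identity $\phi=\gamma K\phi$) is the standard and correct execution.
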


 Finally we generalize \cite[Lemma 3.15]{CS} to sections of the normal bundle.
\begin{Pro}
	\label{C0toC0hom1}
	There exists $C>0$ such that for any section $X \in L^{2}_{W}(\Sigma) \cap C^{0}(\Sigma)$ of the normal bundle, 
    if ${V} \in H^{1}_{W}(\Sigma)$ satisfies $L V=X,$ then ${V} \in C^{0}_{\textnormal{hom};+1} (\Sigma)$ and for $R$ sufficiently large,
	\[
	\Vert V \Vert_{C^{0}_{\textnormal{hom};+1} (\Sigma)} 
    \le C\pr{ \Vert X \Vert_{C^{0}(\Sigma)} + \Vert V \Vert_{C^{0}(\Sigma\cap B_{R}(0))}}.
	\]
\end{Pro}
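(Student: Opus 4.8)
The statement is a weighted-growth estimate for solutions of $LV = X$: one must show that if $X$ decays like $|x|^{-1}$ (as encoded by $X\in C^0(\Sigma)$ being treated, via the hom-spaces, together with the weight) then $V$ grows at most linearly, i.e.\ $V\in C^0_{\mathrm{hom};+1}(\Sigma)$. The natural route is a barrier/contradiction argument combined with the interior Schauder estimate of Proposition \ref{Sc}. First I would reduce to the end of $\Sigma$: on the compact core $\Sigma\cap B_R(0)$ the bound is trivial since $V\in C^0$ there, so the content is the decay/growth control as $|x|\to\infty$. On the end, I would work with the rescaled MCF picture from the paragraph preceding Proposition \ref{Sc}: the equation $\cL^\perp_{\frac12}V + aV = X$ pulls back under $\Phi_t$ to a uniformly parabolic system $\partial_t\hat V - \Delta^\perp_{\hat g_t}\hat V - \hat a(\hat V) = \hat X$ on unit balls with coefficients bounded uniformly (here $\hat a\in C^{0,\alpha}_{\mathrm{hom};-2}$ rescales to something $O(1)$), so Proposition \ref{intSch} applies on every unit ball centered on the end.

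\textbf{Key steps.} (1) \emph{A priori $L^2_W$ control.} From $LV = X$ with $X\in L^2_W\cap C^0$, Theorem \ref{kerL} (splitting $X$ into its $\Ker L$-component, which forces the trivial correction, plus its orthogonal part) gives $\|V\|_{H^2_W}\lesssim \|X\|_{L^2_W} + \|V\|_{L^2_W}$; in particular $V\in H^2_W$, so $\int_\Sigma |V|^2\rho\,dx<\infty$, which already forces integral decay of $V$ against the Gaussian weight. (2) \emph{Pointwise bound from Gaussian $L^2$ bound plus Schauder.} On a unit ball $B_1(x)$ with $|x|$ large, the weight $\rho$ is comparable to $e^{-|x|^2/4}$ up to a fixed factor, so $\int_{B_1(x)}|V|^2\,dx \lesssim e^{|x|^2/4}\|V\|_{L^2_W}^2$; this is too lossy directly, so instead I would iterate: use the equation to upgrade the Gaussian-weighted $L^2$ bound to a Gaussian-weighted pointwise bound via parabolic mean value / Moser iteration on the rescaled flow, then translate back. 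The cleaner packaging, following \cite[Lemma 3.15]{CS}, is to run a contradiction argument: if the conclusion failed there would be a sequence $x_i\to\infty$ with $\tilde r(x_i)^{-1}|V(x_i)| \to \infty$ relative to the right-hand side; rescale $V$ at $x_i$ by $\tilde r(x_i)^{-1}|V(x_i)|$, apply Proposition \ref{intSch}/\ref{Sc} to get $C^{2,\alpha}_{\mathrm{loc}}$ convergence of the rescalings to a nontrivial bounded solution of the limiting equation $\Delta^\perp U = 0$ (the zeroth-order terms $a$, $\frac12$, and the drift all scale away because of the $|x|^{-2}$ decay of $a$ and the linear weight), and derive a contradiction with the $L^2_W$ finiteness / a Liouville-type statement for the normal Laplacian on the asymptotic cone. (3) Conclude the stated inequality by tracking constants through the contradiction argument, absorbing the core contribution into $\|V\|_{C^0(\Sigma\cap B_R(0))}$.

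\textbf{Main obstacle.} The delicate point is Step (2): making the blow-up/contradiction argument produce a genuinely \emph{nontrivial} limit while ruling it out. One must choose the rescaling centers and radii so that, after dividing by the (putatively large) weighted sup, the rescaled sections have uniformly bounded $C^0$ norm on expanding balls but do not converge to zero, and then argue that the limiting section — a bounded normal section on (a piece of, or all of) the cone $\cC$ satisfying the homogeneous limit equation — must vanish, contradicting nontriviality. This requires the interior estimate of Proposition \ref{Sc} to be applied with the correct weights and the curvature-decay Lemmas \ref{sff}, \ref{normal} to see that all lower-order coefficients indeed rescale away; in higher codimension one also needs the Ricci equation bound on $R^\perp$ (as used in Proposition \ref{DDV}) to control the normal curvature terms appearing after differentiating. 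Granting these, the rest is the standard Simon/Chodosh–Schulze bookkeeping, essentially identical to \cite[Lemma 3.15]{CS} once the normal-bundle versions of the ingredients are in place.
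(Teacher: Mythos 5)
Your overall strategy diverges from the paper's, and the step you yourself flag as the main obstacle contains a genuine gap that I do not think can be repaired in the form you propose. The blow-up/Liouville scheme in your Step (2) rests on the claim that "the drift all scales away," but the drift term $-\frac12\np_{x^T}$ in $\cL^\perp_{1/2}$ has a coefficient of size $|x^T|\sim |x_i|\to\infty$ near your rescaling centers: at unit scale around $x_i$ the operator is drift-dominated, and after any parabolic-type normalization the drift persists at order one rather than disappearing (this is exactly why the paper passes to the flow $\Phi_t$ before invoking Proposition \ref{intSch}). So the limit equation is not $\Delta^\perp U=0$, and no Liouville theorem for the normal Laplacian on the cone applies. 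A second, independent problem is that you propose to contradict "the $L^2_W$ finiteness": because of the Gaussian weight $\rho$, membership in $L^2_W$ (or $H^2_W$) imposes essentially no polynomial growth restriction at infinity — a section growing like $e^{|x|^2/8}$ is still in $L^2_W$ — so finiteness of $\|V\|_{L^2_W}$ cannot rule out a nontrivial blow-up limit. Your Step (1) is also not needed and slightly misuses Theorem \ref{kerL}, but that is minor.

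The paper's proof is quite different and avoids both issues. One sets $\varphi=\alpha|x|$ and $U=V/\varphi$, and computes that the drift acting on the weight produces a favorable zeroth-order term: $\Delta(1/\varphi)-\tfrac12\nabla_{x^T}(1/\varphi)=\tfrac{1}{2\alpha|x|}+O(|x|^{-3})$, so that $\langle \cL_{1/2}U,U\rangle=\tfrac12|U|^2+\langle X/\varphi,U\rangle+(\text{gradient terms})+O(r^{-2})|U|^2$. One then truncates in the \emph{value} of $|U|$ with a cutoff $\eta(|U|)$ supported where $|U|>1/2$, chooses $\alpha\sim R^{-1}(\sup_{B_R}|V|+\sup_{\Sigma}|X|)$ so that $\eta(|U|)\equiv 0$ on $\Sigma\cap B_R$ (killing boundary terms), integrates $\eta^2\langle\cL_{1/2}U,U\rangle\rho$ by parts using the sign of the $\eta\eta'$ term, and plays the resulting two-sided inequality off against Ecker's weighted Sobolev inequality (Proposition \ref{ecker}) applied to $\eta(|U|)U$, whose $R^2$ gain forces $\eta(|U|)|U|\equiv 0$, i.e.\ $|V|\le\alpha|x|$. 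This De Giorgi-type truncation against the Gaussian measure is the mechanism that converts the drift into a positive term rather than an obstruction; I would redo your argument along these lines.
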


\begin{proof}
	Define $\varphi(x)=\alpha|x|$ with $\alpha$ to be specified later, and consider the section $U=\frac{V}{\varphi}$. We have that 
	\begin{align*}
	LU
	= L\pr{\frac{V}{\varphi}}
	= \frac{LV}{\varphi} 
	+ 2\na^{\perp}_{\na\frac{1}{\varphi}} V 
	+ \pr{ \Delta\pr{\frac{1}{\varphi}}
	- \frac{1}{2} \na_{{x}^T} \pr{\frac{1}{\varphi}}} V.
	\end{align*}
	Since we know 
	$$
	\na\pr{\frac{1}{\varphi}}
	= -\frac{\alpha x^T}{\varphi^2|x|}
	$$
	and
	$$
	\Delta\pr{\frac{1}{\varphi}}
	= -\alpha\pr{\frac{n-\la x^\perp,\h\ra}{\varphi^2|x|}}+\frac{2\alpha^2|x^T|^2}{|x|^2\varphi^3}+\frac{\alpha|x^T|^2}{\varphi^2|x|^3},
	$$
	we can derive
	\begin{align*}
	\Delta\pr{\frac{1}{\varphi}}
	- \frac{1}{2} \na_{{x}^T} \pr{\frac{1}{\varphi}}
	= \frac{\alpha|x|}{2\varphi^2}
	- \frac{n\alpha}{\varphi^2|x|}
	+ \frac{2\alpha^2|x^T|^2}{|x|^2\varphi^3}
	+ \frac{\alpha|x^T|^2}{\varphi^2|x|^3}
	= \frac{1}{2\alpha|x|}
	- \frac{n}{\alpha|x|^3}
	+ \frac{3|x^T|^2}{|x|^5\alpha}.
	\end{align*}
	Combining these with $|A|=O(r^{-1})$, we can estimate
	\begin{align}
	\la \cL_{\frac{1}{2}} U,\,U\ra
	= \la LU,\, U\ra-|AU|^2
	&= \pair{ \frac{X}{\varphi} + 2\na^\perp_{\na\frac{1}{\varphi}}(U\varphi) + \frac{1}{2}U,\,U }
	+ O(r^{-2})|U|^2\notag\\
	&= \frac{1}{2}|U|^2
	+ \pair{ \frac{X }{\varphi},\,U}
	- \frac{2|U|^2|\na\varphi|^2}{\varphi^2} + 2\varphi\,\na^\perp U\pr{U ,\,\na \frac{1}{\varphi}} + O(r^{-2})|U|^2.
	\end{align}
	
	Next, we take a cutoff function $\eta:[0,+\infty)\to\R$  such that $\eta'\geq0$ and $\eta(x)=\left\{\begin{array}{cc}
	0,&\,0<x<\frac{1}{2}  \\
	1,&\,x>1 
	\end{array}\right.$. 
	Then we have
	\begin{align}\label{nong}
	\na^\perp U(U,\na \eta^2(|U|))=2\eta\eta'  \na^\perp U(U,\na|U|)=2\eta\eta'  \na^\perp U(U,\frac{\na^\perp U(U)}{|U|})=2\eta\eta'  \frac{|\na^\perp U(U)|^2}{|U|}\geq 0.
	\end{align}  
	We set
	\begin{align}
    \alpha = 2R^{-1} \sup\limits_{\Sigma\cap B_R(0)} |V| 
    + 8R^{-1} \sup\limits_{\Sigma\cap B_R(0)} |X|.
	\end{align}
	This implies $\alpha R\geq 2\sup\limits_{\Sigma\cap B_R(0)}|V|$,\, and hence, we have  
	\begin{align}\label{oneside}
	\int_{\Sigma\setminus B_R(0)}\eta^2(|U|) \la \cL_{\frac{1}{2}} U,\,U\ra\rho dx&
    = \int_{\Sigma\setminus B_R(0)} \pr{ \eta^2(|U|) \pr{-|\na^\perp U|^2+\frac{1}{2}|U|^2} 
    -\na^\perp U(U,\na \eta^2(|U|))}\rho dx\notag\\ 
    &\leq \int_{\Sigma\setminus B_R(0)} \eta^2(|U|) \pr{-|\na^\perp U|^2+\frac{1}{2}|U|^2}
	\end{align}
	by \eqref{nong}. 
	On the other hand, as $|A|=O(r^{-1})$ and $\alpha R\geq 8\sup\limits_{\Sigma}|X |$, we have
	\begin{align}\label{otherside}
	\eta^2(|U|) \la \cL_{\frac{1}{2}} U,\,U\ra
    = &  \eta^2(|U|)\pr{\frac{1}{2}|U|^2+ \pair{\frac{X}{\varphi} ,\,U}
    + 2\varphi\na^\perp U \pr{U ,\,\na \frac{1}{\varphi}} 
    + O(R^{-2})|U|^2}\notag\\
    \geq& \eta^2(|U|) \pr{\frac{1}{8}|U|^2+O(R^{-2}){|\na^\perp U|^2}+O(R^{-2})|U|^2}.
	\end{align}
	
	Note that if we apply the Sobolev inequality (Proposition \ref{ecker}) to $\eta(|U|)U,$ we have
	\begin{align*}
	&~~~~R^2 \int_{\Sigma\setminus B_R}
	\eta^2|U|^2\rho~ dx\\
	& \le 4n\int_{\Sigma\setminus B_R}
	\eta^2|U|^2\rho~ dx
	+ 16\int_{\Sigma\setminus B_R}
	\abs{\np(\eta U)}^2 \rho~ dx\\
	& \le 4n\int_{\Sigma\setminus B_R}
	\eta^2|U|^2\rho~ dx
	+ 32\int_{\Sigma\setminus B_R}
	\eta^2\abs{\np U}^2\rho~ dx
	+ 32\int_{\Sigma\setminus B_R}
	\pr{\eta'(|U|)}^2\abs{\np U}\cdot |U|^2\rho~ dx\\
	& \le 4n\int_{\Sigma\setminus B_R}
	\eta^2|U|^2\rho~ dx
	+ 32\int_{\Sigma\setminus B_R}
	\eta^2\abs{\np U}^2\rho~ dx
	+ 32\sup\eta'\cdot \int_{\Sigma\setminus B_R}
	\abs{\np U}^2\rho~ dx
	\end{align*}
	since $\eta'\neq 0$ only when $|U|\in (1/2,1).$
	Consequently, combining \eqref{oneside}, \eqref{otherside} and Proposition \ref{ecker}, we get that 
	\begin{align*}
	&~~~~\frac{R^2-4n}{32}
	\int_{\Sigma\setminus B_R}
	\pr{1+O\pr{R^{-2}}} \eta^2|U|^2\rho~ dx
	+ \int_{\Sigma\setminus B_R}
	\pr{\frac 12+ O\pr{R^{-2}}}\eta^2|U|^2\rho~ dx\\
	& \le \pr{4+O\pr{R^{-2}}} 
	\int_{\Sigma\setminus B_R}
	\abs{\np U}^2\rho~ dx.
	\end{align*}
	Thus, if we take $R$ sufficiently large, we can derive that $\eta^2(|U|) |U|^2=0$, which implies that $|U|\leq 1$. 
    As $U=\frac{V}{\alpha|x|}$ ,this is equivalent to that \begin{align}
	\Vert V \Vert_{C^{0}_{\textnormal{hom};+1}(\Sigma)} \lesssim \Vert X \Vert_{C^{0}(\Sigma)} + \Vert V \Vert_{C^{0}(\Sigma\cap B_{R}(0))}.
	\end{align}
	This proves the desired estimate.
\end{proof}

Combining Proposition \ref{C0toC0hom1} with the Schauder-type estimate (Proposition \ref{Sc}), we obtain the following corollary, which is essential when applying Simon's theory to obtain a \L ojasiewicz inequality.
\begin{Cor}\label{SolveL}
    For $U\in\CS 0,$ there exists a solution $V\in\CS 2$ to $LV=U$ if and only if $U$ is $L^2_W$-orthogonal to $\Ker L\sbst H^1_W(\Sigma).$
\end{Cor}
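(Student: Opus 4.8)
The plan is to assemble the weighted $L^2$ existence theory of Theorem \ref{kerL} with the two regularity statements, Proposition \ref{C0toC0hom1} and Proposition \ref{Sc}, after first recording the elementary inclusions between the cone H\"older spaces and the weighted Sobolev spaces. The starting observations are that $\CS 0 = C^{0,\alpha}_{\mathrm{hom},-1}(\Sigma)$ consists of normal sections decaying like $\tilde r^{-1}$, so that against the Gaussian weight $\rho$ one has $\CS 0\subset L^2_W(\Sigma)\cap C^0(\Sigma)$; and that an element $U_{(v,V)}\in\CS 2$ has at most linear growth with polynomially controlled first and second covariant derivatives (the $V$-part lies in $C^{2,\alpha}_{\mathrm{hom},-1}(\Sigma)$ and therefore decays), so that $\CS 2\subset H^2_W(\Sigma)\subset H^1_W(\Sigma)$.

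For the forward implication, suppose $V\in\CS 2$ solves $LV=U$ with $U\in\CS 0$. Then $V\in H^1_W(\Sigma)$ is in particular a weak solution of $LV=U$, and $U\in L^2_W(\Sigma)$, so the ``only if'' half of Theorem \ref{kerL} immediately forces $U$ to be $L^2_W$-orthogonal to $\Ker L$. (Equivalently, one integrates by parts directly: the Gaussian decay of $\rho$ kills all boundary terms against the polynomially growing $V$ and $\varphi\in\Ker L\subset H^1_W(\Sigma)$, and the formal self-adjointness of $L$ with respect to $\rho\,dx$ — visible in the symmetry of the bilinear form $B_\gamma$ — yields $\int_\Sigma\langle U,\varphi\rangle\rho\,dx=\int_\Sigma\langle LV,\varphi\rangle\rho\,dx=\int_\Sigma\langle V,L\varphi\rangle\rho\,dx=0$.)

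For the reverse implication, assume $U\in\CS 0$ is $L^2_W$-orthogonal to $\Ker L$. Since $U\in L^2_W(\Sigma)$, Theorem \ref{kerL} produces a weak solution $V\in H^1_W(\Sigma)$ of $LV=U$, which I normalize to be $L^2_W$-orthogonal to $\Ker L$, so that $\|V\|_{H^2_W}\lesssim\|U\|_{L^2_W}$. Because $L$ is uniformly elliptic with smooth coefficients and $U\in C^{0,\alpha}_{\mathrm{loc}}(\Sigma)$, interior Schauder estimates upgrade $V$ to $C^{2,\alpha}_{\mathrm{loc}}(\Sigma)$. Next I apply Proposition \ref{C0toC0hom1} with $X:=U\in L^2_W(\Sigma)\cap C^0(\Sigma)$ to conclude $V\in C^0_{\mathrm{hom};+1}(\Sigma)$ with the quantitative bound stated there. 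Finally, writing $L=\cL^\perp_{\frac12}+a$ with $a(\cdot)=\sum_{k,\ell}\langle\cdot,A_{k\ell}\rangle A_{k\ell}\in C^{0,\alpha}_{\mathrm{hom};-2}(\Sigma)$ by Lemma \ref{sff}, the hypotheses of Proposition \ref{Sc} are met, namely $V\in C^{2,\alpha}_{\mathrm{loc}}(\Sigma)\cap C^0_{\mathrm{hom};+1}(\Sigma)$ and $\cL^\perp_{\frac12}V+aV=U\in\CS 0$, so that proposition gives $V\in\CS 2$, which is the desired solution.

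The argument is mostly bookkeeping: the substantive analytic inputs — Theorem \ref{kerL}, and the two regularity propositions, whose proofs draw on the non-standard Schauder estimate of Proposition \ref{intSch} and the Sobolev inequality of Proposition \ref{ecker} — are already in hand, and the chain Theorem \ref{kerL} $\to$ interior regularity $\to$ Proposition \ref{C0toC0hom1} $\to$ Proposition \ref{Sc} is the same one used by Chodosh–Schulze in codimension one. The only point needing a little care, and where I expect whatever mild friction there is, is the compatibility of the function spaces: verifying $\CS 2\subset H^2_W(\Sigma)$ and $\CS 0\subset L^2_W(\Sigma)\cap C^0(\Sigma)$, checking that the $H^1_W$-solution furnished by Theorem \ref{kerL} is locally regular enough to feed into Proposition \ref{Sc}, and ensuring the normalization by $\Ker L$ is consistent between the weighted-Sobolev and cone-space descriptions.
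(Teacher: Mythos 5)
Your argument is correct and is exactly the route the paper takes: the paper's entire ``proof'' is the one-line remark that the corollary follows by combining Theorem \ref{kerL} (weighted $L^2$ existence), Proposition \ref{C0toC0hom1} (growth bound), and Proposition \ref{Sc} (Schauder-type estimate in the cone spaces), which is precisely your chain, and your preliminary inclusions $\CS 0\subset L^2_W\cap C^0$ and $\CS 2\subset H^1_W$ are the right (easy) compatibility checks. No issues.
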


\section{\bf \L ojasiewicz-Simon inequalities}\label{sec:3}
In this section, we will set up the frameworks that enable us to apply Simon's theory to establish a \L ojasiewicz-Simon inequality for entire vector fields over an asymptotically conical self-shrinker, and localize it to the form we would like to apply later.
We will follow the guidelines of \cite{CS} and \cite{S83}. 

Assume $M$ is a normal graph of $V\in N\Sigma$ over $\Sigma.$ 
Recall that the Gaussian area (or the $F$-functional) and the entropy of $M$ are defined to be
$$F(M)
:= \int_M \rho~ dx
\text{ and }
\lambda(M)
:= \sup_{x_0\in\bb R^{n+k},~t_0>0}
F(t_0 M + x_0),
$$
where we recall that $\rho(x):= (4\pi)^{-n/2}e^{-|x|^2/4}$. 
By \cite[Section 3]{S14}, we know that given a variation $W$ of $V,$
$$
\delta_{W} F(M)
=-\int_\Sigma 
\pair{
	\Pi_{N\Sigma}\pr{\h_M + \frac{x^\perp}{2}}|_{x=y+V(y)},
	W(y)
}
J(y,V,\n^\Sigma V)
\rho(y+V(y))
dy
$$
where $\Pi_{N\Sigma}$ is the projection onto $N\Sigma$ and $J(y,V,\n^\Sigma V) = \text{Jac}(D\exp_y(V))$ is the area element. 
Thus, the linearization of $F$ is 
\begin{equation*}
\M(V)
= J(y,V,\n^\Sigma V)
\rho(y+V(y))
\rho(y)^{-1}\cdot
\Pi_{N\Sigma}\pr{\h_M + \frac{x^\perp}{2}}|_{x=y+V(y)}.
\end{equation*}

To exploit the ideas in \cite{S83}, we need some good properties of the map $\M.$
The proofs are parallel to the ones in \cite{CS} with some standard modifications.

\begin{Lem}\label{propM}
	For $\beta$ small enough depending on $\Sigma,$ the map
	$$\M\colon \{u\in\CS{2}:
	\|u\|_{\CS{2}} <\beta
	\}\to \CS{0}
	$$
	is Fr\'echet differentiable with its derivative at $0$ being $L.$
\end{Lem}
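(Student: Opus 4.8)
The plan is to follow Simon's original strategy (as in \cite{S83}) and its adaptation in \cite{CS}, checking that each step goes through with the cone H\"older spaces $\CS 2$ and $\CS 0$ defined above. The overall structure is: (i) write $\M(V)$ as a smooth nonlinear expression in the jets $(V,\nabla^\Sigma V,(\nabla^\Sigma)^2 V)$ together with the background geometry of $\Sigma$, evaluated at the point $y+V(y)$; (ii) verify that the resulting map is well-defined into $\CS 0$ on a small ball, using the mapping properties of the cone spaces established in Section \ref{sec:1}; (iii) compute the Fr\'echet derivative at $0$ by a first-order Taylor expansion and identify it with $L$; and (iv) upgrade pointwise/Gateaux differentiability to Fr\'echet differentiability on the small ball.

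\textbf{Step 1: Structure of $\M$.} First I would expand $\M(V)$ pointwise. The factor $J(y,V,\nabla^\Sigma V)\rho(y+V(y))\rho(y)^{-1}$ is a smooth function of $(V,\nabla^\Sigma V)$ that equals $1$ when $V=0$; the factor $\Pi_{N\Sigma}(\mathbf{H}_M+\tfrac{x^\perp}{2})|_{x=y+V(y)}$ is a smooth function of $(V,\nabla^\Sigma V,(\nabla^\Sigma)^2 V)$ since $\mathbf H_M$ involves the second fundamental form of the graph, which depends on two derivatives of $V$; it vanishes when $V=0$ because $\Sigma$ is a self-shrinker. Thus $\M(V) = \mathcal{Q}(y;V,\nabla^\Sigma V,(\nabla^\Sigma)^2 V)$ with $\mathcal{Q}$ smooth in its arguments, $\mathcal{Q}(y;0,0,0)=0$, and with all $y$-dependence controlled by the geometry of $\Sigma$, which on the end is asymptotically conical by Lemma \ref{sff}, Lemma \ref{graph}, Corollary \ref{sharp-metric}, and Lemma \ref{normal}. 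The key point, exactly as in \cite[Section 4]{CS}, is that differentiating $\mathcal{Q}$ at $V=0$ produces the stability operator: the linear-in-$(\nabla^\Sigma)^2 V$ part gives $\Delta_\Sigma^\perp V$, the linear-in-$(\nabla^\Sigma) V$ part combines with the derivative of the Gaussian weight to give $-\tfrac12\nabla^\perp_{x^T}V$, and the zeroth-order part gives $\tfrac12 V + \sum_{k,\ell}\langle V,A_{k\ell}\rangle A_{k\ell}$, i.e. $\delta\M|_0 = L$.

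\textbf{Step 2: Mapping into $\CS 0$ and boundedness.} To see that $\M$ maps a small ball in $\CS 2$ into $\CS 0 = C^{0,\alpha}_{\text{hom},-1}(\Sigma)$, I would decompose $U = U_{(v,V)} = \chi(r)(v(\omega))^\perp r + V$ with $v\in C^{2,\alpha}(\Gamma)$ and $V\in C^{2,\alpha}_{\text{an},-1}(\Sigma)$ and track the decay of $\mathcal{Q}$ under this decomposition. For the $V$-part the homogeneous decay is immediate from the weighted definitions. For the conical part $\chi(r)(v(\omega))^\perp r$, the crucial observation (already used implicitly in the commented-out lemma before Proposition \ref{intSch}) is that the linear radial term exhibits a cancellation: $(-\tfrac12 \vec x\cdot\nabla_\Sigma + \tfrac12)(c(\omega)r) = O(r^{-1})$ by Lemma \ref{normal}, so that the apparently $O(r)$-growing term in $L$ applied to the conical piece is actually in $C^{0,\alpha}_{\text{hom},-1}$; and the quadratic-and-higher terms in $\mathcal Q$ applied to something of size $O(r)$ with the appropriate derivative decay land in a better weighted space. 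The estimate $\|a\|_{C^{0,\alpha}(B_1(x))} = O(|x|^{-2})$ for the curvature term $a(V) = \sum_{k,\ell}\langle V,A_{k\ell}\rangle A_{k\ell}$, noted after Proposition \ref{intSch}, is exactly what makes this work; this is where the higher-codimensional curvature complexity enters, but it only contributes lower-order terms and is handled by the same bookkeeping as in \cite{CS}.

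\textbf{Step 3: Fr\'echet differentiability.} Finally, I would show that the Gateaux derivative, which on a small enough ball is given by a formula of the same type (smooth in the jets of $V$), depends continuously on $V$ in operator norm $\CS 2 \to \CS 0$; this gives Fr\'echet differentiability. The argument is the standard one: write $\M(V+W) - \M(V) - D\M_V(W)$ as an integral remainder $\int_0^1 (D\M_{V+sW} - D\M_V)(W)\,ds$ and estimate using the smoothness of $\mathcal Q$ together with the Banach algebra / multiplication properties of the weighted H\"older spaces (products of a $C^{0,\alpha}_{\text{hom},-\gamma_1}$ function and a $C^{0,\alpha}_{\text{hom},-\gamma_2}$ function lie in $C^{0,\alpha}_{\text{hom},-\gamma_1-\gamma_2}$, with the weights adding favorably). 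The main obstacle I expect is not any single hard estimate but rather the bookkeeping in Step 2: one must check that every nonlinear term produced by expanding $\mathbf H_M$ and the Jacobian—now with the more complicated second fundamental form and normal curvature of a higher-codimensional graph—respects the anisotropic weighting, in particular that the special extra norm $\|\vec x\cdot\nabla_\Sigma V\|^{\text{hom}}_{0,\alpha;-1}$ in the definition of $C^{2,\alpha}_{\text{an},-1}$ is controlled. This is precisely the place where the cancellation in the radial direction (Lemma \ref{normal}, and the sharper radial estimates $|\nabla^j_{\partial_r}W| = O(r^{-1-j})$ in Lemma \ref{graph}) must be invoked; once that is in hand, the remaining verifications are routine modifications of \cite{CS}.
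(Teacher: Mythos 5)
Your proposal is correct and follows essentially the same route as the paper's proof: both rest on (a) the uniform $C^{0,\alpha}$ boundedness of the Jacobian-times-Gaussian factor, (b) the radial cancellation that makes the linearly growing conical piece land in $\CS 0$ --- which the paper realizes concretely by computing the normal basis $N_\beta = -(\id-\Phi)^{-1}\nabla^\Sigma V^\beta + \N_\beta$ of the graph and observing $\pair{x,N_\beta/|N_\beta|} = |N_\beta|^{-1}(V^\beta - \pair{y,\nabla^\Sigma V^\beta} + O(|y|^{-1}))$, the same cancellation you invoke via Lemma \ref{normal} --- and (c) a standard argument upgrading boundedness to Fr\'echet differentiability. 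The paper's proof is in fact no more detailed than yours on point (c), so there is nothing to flag.
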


\begin{proof}
	First we prove the continuity of $\M.$ For $V\in\CS{2},$ note that 
	$$
	J(y,V,\n^\Sigma V)
	\rho(y+V(y))
	\rho(y)^{-1}
	= J(y,V,\n^\Sigma V)\cdot
	e^{-\frac{2\pair{V(y),y}+|V(y)|^2}{4}},
	$$
	which is uniformly bounded in $C^{0,\alpha}(\Sigma)$ as $y\to\infty.$ 
	
	For the last term, at a point $y\in\Sigma,$ if we let $F$ be a parametrization of $\Sigma$ near $y,$ then we write $\td F$ to be the parametrization of $M$ near $y+V(y),$ given by
	\begin{equation}\label{tdF}
	\td F(y) = F(y) + V(y)
	= F(y) + V^\beta(y)\N_\beta(y)
	\end{equation}
	where $\{\N_1,\cdots,\N_k\}$ is a local orthonormal basis of $N\Sigma$ near $p.$ Then we have
	$$\bd_i\td F = \bd_i F + \pr{\bd_i V^\beta}\N_\beta - V^\beta A_{ik}^\beta\bd_k F$$
	where $A_{ij}=A_{ij}^\beta\N_\beta$ is the second fundamental form of $\Sigma.$ 
    Therefore, by direct calculations (similar to those used to derive \cite[(2.27)]{W14}), we can see that if we let $\Phi:=\pair{V,A}=(\pair{V,A_{ij}})_{ij},$ then 
	$$N_\beta := -\pr{\id-\Phi}^{-1}\n^\Sigma V^\beta + \N_\beta
	$$
	forms a local orthogonal basis ($\beta=1,\cdots,k$) of $NM$ near $\td F(y).$ 
	Combining this with the self-shrinker equation and the curvature decay (Lemma \ref{sff}), we have
	$$\abs{
		\pair{x,\frac{N_\beta}{|N_\beta|}}
	}
	= \frac 1{|N_\beta|} \pr{V^\beta - \pair{y,\n^\Sigma V^\beta} + O\pr{|y|^{-1}}}
	$$
	at $x=y+V(y).$ This shows that $\M$ is a bounded map, and its differentiability follows from a similar standard argument.
\end{proof}

By Theorem \ref{kerL}, $\Ker L\sbst H^1_W(\Sigma)$ is finite-dimensional. 
Define $\Pi\colon L^2_W(\Sigma)\to L^2_W(\Sigma)\cap\CS{2}$ to be the projection onto $\Ker L$ and $\sN:=\M+\Pi.$
By Lemma \ref{propM}, $\sN$ is Fr\'echet differentiable with its derivative at $0$ being $L+\Pi,$ which is an isomorphism from $\CS{2}$ to $\CS{0}.$ 
Therefore, the inverse function theorem implies that there exists open neighborhoods $W_1\sbst \CS 2$ and $W_2\sbst \CS 0$ of $0$ such that $\sN\colon W_1\to W_2$ is bijective with the inverse $\Psi\colon W_2\to W_1.$ 
We next prove that these maps behave nicely when restricted to these neighborhoods.

\begin{Lem}\label{Mcon}
	After shrinking $W_1$ and $W_2$ if necessary, we can find $C>0$ such that 
	$$\|\M(V_1)-\M(V_2)\|_{L^2_W} \le C\|V_1-V_2\|_{H^2_W}$$
	for $V_1,V_2\in W_1$ and 
	$$\|\Psi(U_1)-\Psi(U_2)\|_{H^2_W}\le C\|U_1-U_2\|_{L^2_W}$$
	for $U_1,U_2\in W_2.$
\end{Lem}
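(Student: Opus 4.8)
The plan is to establish the two Lipschitz estimates by interpolating between the Fréchet differentiability of $\M$ (equivalently $\sN$) from Lemma \ref{propM} and the coarse elliptic/Sobolev control provided by Theorem \ref{kerL} and Lemma \ref{DV}. First I would prove the estimate for $\M$. By Lemma \ref{propM}, $\M$ is Fréchet differentiable on the $\beta$-ball in $\CS{2}$ with derivative $D\M(0)=L$; standard arguments (as in \cite{CS}) in fact give that $\M$ is $C^1$ there, so $D\M(V)$ depends continuously on $V$ as a bounded operator $\CS{2}\to\CS{0}$. Hence, after shrinking $W_1$ so that it is a convex neighborhood of $0$ contained in that ball, the mean value inequality along the segment from $V_2$ to $V_1$ gives $\|\M(V_1)-\M(V_2)\|_{\CS0}\le (\sup_{V\in W_1}\|D\M(V)\|)\,\|V_1-V_2\|_{\CS2}$. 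The point is then to convert this to the weaker norms: $\|\cdot\|_{L^2_W}\lesssim\|\cdot\|_{\CS0}$ follows from the exponential weight $\rho$ and the definition of the homogeneous norm (the weight controls the growth at infinity), and $\|V_1-V_2\|_{\CS2}$ dominates $\|V_1-V_2\|_{H^2_W}$... wait, that is the wrong direction.

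So the correct route for the first inequality is the reverse: I would use that $\M$ is \emph{locally Lipschitz from $H^2_W$ into $L^2_W$}, which is established directly (not via $\CS2$) by the same pointwise expansion of $\M(V)$ used in the proof of Lemma \ref{propM}. Concretely, writing $\M(V)=J(y,V,\n^\Sigma V)\,e^{-\frac{2\pair{V,y}+|V|^2}{4}}\,\Pi_{N\Sigma}(\h_M+\frac{x^\perp}{2})$, each factor is a smooth function of $(V,\n^\Sigma V)$ with bounded derivatives on the $\beta$-ball (here one uses the self-shrinker equation and the curvature decay Lemma \ref{sff}, exactly as in Lemma \ref{propM}, and that $\|V\|_{\CS2}<\beta$ gives uniform pointwise smallness of $|V|$ and $|\n^\Sigma V|$). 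Therefore $|\M(V_1)-\M(V_2)|\lesssim |V_1-V_2|+|\n^\Sigma V_1-\n^\Sigma V_2|+|(\n^\Sigma)^2V_1-(\n^\Sigma)^2V_2|$ pointwise, and integrating against $\rho$ gives $\|\M(V_1)-\M(V_2)\|_{L^2_W}\le C\|V_1-V_2\|_{H^2_W}$, which is the claim.

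For the second inequality, I would differentiate the identity $\sN\circ\Psi=\id$ on $W_2$. Since $\sN=\M+\Pi$ and $\Pi$ is a fixed bounded finite-rank operator, $\sN$ inherits the local Lipschitz bound from $H^2_W$ to $L^2_W$ just proved; but I need the inverse to be Lipschitz, so instead I use the lower bound. From Theorem \ref{kerL} together with Lemma \ref{propM}, the linearization $L+\Pi\colon\CS2\to\CS0$ is an isomorphism, and restricted to the weighted Sobolev picture $L+\Pi\colon H^2_W\to L^2_W$ satisfies $\|V\|_{H^2_W}\lesssim\|(L+\Pi)V\|_{L^2_W}$ (this is precisely the estimate packaged by Theorem \ref{kerL} and Proposition \ref{DDV}: on $\Ker L$ the operator is $\Pi=\id$, on its $L^2_W$-complement it is $L$ with the stated bound). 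After shrinking $W_1$ so that the $C^1$ map $\sN$ has $\|D\sN(V)-(L+\Pi)\|_{H^2_W\to L^2_W}$ as small as we like for $V\in W_1$, a standard Neumann-series/contraction argument shows $\|D\sN(V)^{-1}\|_{L^2_W\to H^2_W}\le 2C_0$ uniformly on $W_1$, where $C_0$ is the bound for $(L+\Pi)^{-1}$. Then for $U_1,U_2\in W_2$, writing $V_i=\Psi(U_i)$ and integrating $\frac{d}{ds}\Psi(sU_1+(1-s)U_2)=D\sN(\Psi(\cdot))^{-1}(U_1-U_2)$ over $s\in[0,1]$ (after shrinking $W_2$ so the segment stays in the image), we obtain $\|\Psi(U_1)-\Psi(U_2)\|_{H^2_W}\le 2C_0\|U_1-U_2\|_{L^2_W}$, as desired.

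The main obstacle I anticipate is the mismatch between the two functional-analytic frameworks in play — the cone Hölder spaces $\CS2,\CS0$ (in which differentiability and invertibility of $\sN$ are cleanly available from Lemma \ref{propM} and the inverse function theorem) and the weighted Sobolev spaces $H^2_W,L^2_W$ (in which the Lipschitz bounds are asserted). One must check that $\sN$ is not merely Fréchet differentiable as a map between the cone spaces but also as (or controlled as) a map $H^2_W\to L^2_W$ with derivative $L+\Pi$ there, and that the coercivity $\|V\|_{H^2_W}\lesssim\|(L+\Pi)V\|_{L^2_W}$ genuinely holds with a uniform constant; both reduce to the pointwise expansions already carried out for Lemma \ref{propM} plus Theorem \ref{kerL}, so the work is bookkeeping rather than new ideas, but it is the step where care is needed. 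Everything else is the standard mean value inequality for $C^1$ maps between Banach spaces.
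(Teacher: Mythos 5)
Your overall architecture — a pointwise Lipschitz expansion of $\M$ for the first estimate, and coercivity of $L+\Pi$ from $H^2_W$ to $L^2_W$ plus a perturbation/absorption argument for the second — is the same as the paper's (which computes $\td g_{ij}$, $\td A_{ij}$ for the normal graph and then invokes Simon's quasilinear decomposition from \cite{S96}). However, there is one concrete error in your first step. The pointwise bound $|\M(V_1)-\M(V_2)|\lesssim |V_1-V_2|+|\n^\Sigma(V_1-V_2)|+|(\n^\Sigma)^2(V_1-V_2)|$ with an $x$-independent constant is false: the term $\frac{x^\perp}{2}$ in $\M$, expanded as in the proof of Lemma \ref{propM}, produces the contribution $V^\beta-\pair{y,\n^\Sigma V^\beta}+O(|y|^{-1})$, so the difference $\M(V_1)-\M(V_2)$ contains $\pair{y,\n^\Sigma(V_1^\beta-V_2^\beta)}$, whose coefficient grows linearly in $|y|$. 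Your claim that "each factor is a smooth function of $(V,\n^\Sigma V)$ with bounded derivatives on the $\beta$-ball" therefore does not hold uniformly over $\Sigma$, and integrating your (incorrect) pointwise bound against $\rho$ does not yield the stated $H^2_W$ control.

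The repair is exactly what the paper does: isolate the full linearization, writing $\M(V_1)-\M(V_2)=L(V_1-V_2)+A\cdot(\np)^2(V_1-V_2)+B\cdot\np(V_1-V_2)+C(V_1-V_2)$, where the unbounded drift $-\tfrac12\np_{x^T}$ is entirely contained in $L$ and the remainder coefficients satisfy $\sup_\Sigma(|A|+|B|+|C|)\lesssim \|V_1\|_{\CS 2}+\|V_2\|_{\CS 2}$ (boundedness — in fact smallness — of these coefficients is precisely what the anisotropic norm in $\CS 2$, which controls $\vec x\cdot\n_\Sigma V$, buys you). One then bounds $\|L(V_1-V_2)\|_{L^2_W}\lesssim\|V_1-V_2\|_{H^2_W}$ using the weighted Sobolev inequality (Proposition \ref{ecker} applied to $\np(V_1-V_2)$, together with Lemma \ref{DV}) to handle $\|\,|x^T|\,\np(V_1-V_2)\|_{L^2_W}$. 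With this decomposition in hand, your second step is fine and is essentially Simon's argument: the smallness of the coefficients $A,B,C$ on $W_1$ is exactly the statement that $D\sN(V)-(L+\Pi)$ is small as an operator $H^2_W\to L^2_W$, and combining the coercivity $\|V\|_{H^2_W}\lesssim\|(L+\Pi)V\|_{L^2_W}$ from Theorem \ref{kerL} with absorption gives the Lipschitz bound for $\Psi$; you do not even need to differentiate $\Psi$, since writing $U_1-U_2=(L+\Pi)(V_1-V_2)+R$ with $\|R\|_{L^2_W}\le\epsilon\|V_1-V_2\|_{H^2_W}$ and absorbing suffices.
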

\begin{proof}
	At a point $y\in\Sigma,$ if we let $F$ be a parametrization of $\Sigma$ near $y,$ then we write $\td F$ to be the parametrization of $M$ near $y+V(y),$ given by \eqref{tdF}. We may assume $g_{ij}=\pair{\bd_i F,\bd_j F}=\delta_{ij}$ at $y.$ Then using the calculations in the proof of \cite[lemma 3.18]{Kh}, we have
	\begin{equation*}
	\td g_{ij} 
	:= \pair{\bd_i\td F, \bd_j\td F}
	= \delta_{ij}
	+ \pair{\bd_iV,\bd_jV}
	- 2\pair{V,A_{ij}}
	+ V^\beta V^\gamma A_{ik}^\beta A_{jk}^\gamma
	\end{equation*}
	at $y+V(y)$ where we write $V=V^\beta\N_\beta$ and $A_{ij}=A_{ij}^\beta\N_\beta$ in a local orthonormal basis $\{\N_\beta\}_{\beta=1,\cdots,k}.$ Using this, we can derive
	\begin{equation}\label{tdg}
	\td g^{ij}
	= \delta^{ij} - \pair{\bd_iV,\bd_jV} + 2\pair{V,A_{ij}} 
	+ Q_{ij}(y,V,D V)
	\end{equation}
	and 
	\begin{equation}\label{tdA}
	    \td A_{ij}
	= A_{ij} + (\bd_{ij}^2 V^\beta) \N_\beta
	- A_{ij}^\beta (\bd_k V^\beta)(\bd_k F)
	+ Q(y,V,D V)
	\end{equation}
	with $|Q_{ij}(y,V,D V)|, |Q(y,V,D V)|\lesssim |y|^{-2} \pr{|V|^2 + |DV|^2}.$ 
    Using \eqref{tdg} and \eqref{tdA}, we can get the expression of the mean curvature $\td H$ of $M,$ and use the same arguments as in \cite{S96} to derive
	$$\M(V_1)-\M(V_2)
	= L(V_1-V_2)
	+ A\cdot (\n^\perp)^2(V_1-V_2)
	+ B\cdot \np(V_1-V_2)
	+ C(V_1-V_2)
	$$
	with 
	$$\sup_\Sigma\pr{|A|+|B|+|C|}
	\lesssim \|V_1\|_{\CS 2} + \|V_2\|_{\CS 2}.$$
	Combining this with Lemma \ref{DV}, the first conclusion follows.
    The second conclusion follows from the same arguments as in \cite[Proof of Lemma 1 in 3.12]{S96} with the estimates above in place of those there.
\end{proof}

At this point, we can follow the same arguments as in \cite{S83} to derive a \L ojasiewicz-Simon inequality for $M.$
For example, we use Theorem \ref{kerL} and Corollary \ref{SolveL} to replace \cite[Equation (2.5)]{S83} and the preceding existence result, and Lemma \ref{Mcon} in place of \cite[Equation (2.10)]{S83}. 
Then, the arguments there will produce the following \L ojasiewicz-Simon inequality for entire vector fields.

\begin{Thm}[Entire \L ojasiewicz-Simon inequality]\label{entireL}
There exists $\beta_0>0,$ $\theta\in(0,1/2)$ and $C>0$ such that if $M$ is the graph over $\Sigma$ of a vector field $U\in\CS{2}$ with $\|U\|_{\CS{2}}\le\beta_0,$ then
\begin{equation*}
|F(M) - F(\Sigma)|^{1-\theta}
\le C\|\M(U)\|_{L^2_W(\Sigma)}
\le C\pr{
\int_M |\varphi|^2 \rho~ dx
}^{1/2}.
\end{equation*}
\end{Thm}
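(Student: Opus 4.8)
The plan is to deduce the inequality by transferring the classical Lojasiewicz--Simon machinery of Simon \cite{S83}, in the form presented in \cite{CS}, to our higher-codimensional setting, using precisely the ingredients assembled in Sections \ref{sec:1}--\ref{sec:2}. First I would record the Banach-space framework: the analytic functional is $F$ restricted to normal graphs parametrized by $U \in \CS{2}$, its gradient (with respect to the weighted $L^2_W$ inner product, up to the positive Jacobian/weight factor which is bounded in $C^{0,\alpha}$ by the computation in Lemma \ref{propM}) is $\M(U)$, and by Lemma \ref{propM} the map $\M \colon \{\|U\|_{\CS{2}}<\beta\}\to\CS{0}$ is Fr\'echet differentiable with $D\M(0)=L$. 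By Theorem \ref{kerL}, $\Ker L \sbst H^1_W(\Sigma)$ is finite-dimensional; by Corollary \ref{SolveL}, $L$ maps $\CS{2}$ onto the $L^2_W$-orthogonal complement of $\Ker L$ inside $\CS{0}$; and $L+\Pi$ is therefore an isomorphism $\CS{2}\to\CS{0}$, so $\sN=\M+\Pi$ is a local diffeomorphism with inverse $\Psi$ defined on neighborhoods $W_1,W_2$ of $0$.

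Next I would carry out the Lyapunov--Schmidt reduction exactly as in \cite[Section 2]{S83}. Write the finite-dimensional kernel as $\Ker L = \mathrm{span}\{e_1,\dots,e_d\}$, choose an $L^2_W$-orthonormal basis, and decompose any small $U$ accordingly. The infinite-dimensional part of the equation $\M(U)=0$ is solved via the implicit function theorem using the isomorphism property of $L$ on the complement, with the quantitative Lipschitz control coming from Lemma \ref{Mcon} (this replaces \cite[Eq.\ (2.10)]{S83}) together with the elliptic/Sobolev estimates of Propositions \ref{ecker}, \ref{DDV} and Theorem \ref{kerL} (replacing \cite[Eq.\ (2.5)]{S83}). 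This produces a finite-dimensional, real-\emph{analytic} function $g(\xi)=F(\text{graph of the solution associated to }\xi)$ on a neighborhood of $0\in\bb R^d$ whose gradient is comparable, in norm, to $\|\M(U)\|_{L^2_W}$ for the corresponding $U$. One must check real analyticity of $g$: this follows since $F$ is analytic in $U$ (the integrand is an analytic function of $U,\nabla^\Sigma U$ composed with the embedding, integrated against the fixed Gaussian weight) and the solution operator of the reduced equation is analytic by the analytic implicit function theorem.

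Then I would apply the finite-dimensional \L ojasiewicz inequality \cite{L} to $g$: there exist $\theta\in(0,1/2)$ and $C>0$ with $|g(\xi)-g(0)|^{1-\theta}\le C|\nabla g(\xi)|$ for $\xi$ near $0$. Unwinding the reduction — using that $F(M)-F(\Sigma)$ differs from $g(\xi)-g(0)$ by a term controlled by $\|\M(U)\|_{L^2_W}^2$ (a quadratic error, harmless since $\theta<1/2$), and that $\|\M(U)\|_{L^2_W}$ dominates $|\nabla g(\xi)|$ — yields $|F(M)-F(\Sigma)|^{1-\theta}\le C\|\M(U)\|_{L^2_W}$. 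The final inequality $\|\M(U)\|_{L^2_W(\Sigma)}\le C(\int_M|\varphi|^2\rho\,dx)^{1/2}$ is essentially a change of variables: by the definition of $\M$ and $\delta F$, $\|\M(U)\|_{L^2_W(\Sigma)}^2 = \int_\Sigma |\Pi_{N\Sigma}(\h_M+\tfrac{x^\perp}{2})|_{x=y+U(y)}|^2 J(y,U,\nabla^\Sigma U)^2 \rho(y+U(y))^2\rho(y)^{-1}\,dy$, and pulling back to $M$ (where $\varphi:=\h_M+\tfrac{x^\perp}{2}$, or rather its normal-to-$\Sigma$ part along the graph) together with the bounded Jacobian and weight factors gives the stated bound, possibly after enlarging $C$ and shrinking $\beta_0$.

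The main obstacle is ensuring that the reduction from \cite{S83} genuinely goes through with the \emph{weighted} function spaces $\CS{2},\CS{0}$ and the vector-bundle-valued operator $L$ rather than scalar functions on a compact manifold: one needs the three structural inputs — finite-dimensionality of $\Ker L$ and solvability with estimates (Theorem \ref{kerL}, Corollary \ref{SolveL}), Fr\'echet differentiability with derivative $L$ (Lemma \ref{propM}), and the quantitative Lipschitz bounds (Lemma \ref{Mcon}) — to be exactly the hypotheses Simon's abstract argument consumes, and to verify that the Gaussian weight $\rho$ causes no issue with analyticity of $F$ or with the compact embedding $H^1_W\hookrightarrow L^2_W$ used to run the argument. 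Since all of these have been established in the preceding sections with the higher-codimensional curvature terms ($\sum_{k,\ell}\langle\cdot,A_{k\ell}\rangle A_{k\ell}$, the normal curvature $R^\perp$, etc.) already incorporated, the remaining work is bookkeeping: checking that no step of \cite{S83} or \cite[Section 4]{CS} used a maximum principle or a scalar-specific identity that fails here — and indeed the only such places in the global argument were dealt with by Propositions \ref{intSch}, \ref{Sc} and \ref{DDV} via the Ricci equation in place of scalar commutators.
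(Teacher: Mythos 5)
Your proposal is correct and follows essentially the same route as the paper: the paper's proof of Theorem \ref{entireL} is precisely to run Simon's Lyapunov--Schmidt reduction from \cite{S83}, substituting Theorem \ref{kerL} and Corollary \ref{SolveL} for \cite[Eq.\ (2.5)]{S83} and Lemma \ref{Mcon} for \cite[Eq.\ (2.10)]{S83}, with Lemma \ref{propM} supplying the differentiability of $\M$, and the final inequality obtained by the change of variables you describe. Your write-up simply makes explicit the steps the paper leaves to the reader.
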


Next, we need to localize the Simon-\L ojasiewicz inequality obtained above, since in a blowup sequence of MCF, we can only write the flow as the graph of a vector field on a large compact set.
To this end, we will introduce some scales related to the concerned self-shrinkers.
For $\theta$ given by Theorem \ref{entireL}, define 
$$
\Theta := 
\left(\frac{1-\frac\theta 2}{1-\theta}\right)^{\frac 1 4} \in \left(1,\left(\frac 32\right)^{\frac 14} \, \right].
$$

\begin{Def}\label{scale}
	For a submanifold $M^n\sbst\bb R^{n+k},$ let $\varphi=\h+\frac{x^\perp}{2},$ and we define the following scales of $M.$
	
	\noindent (1) (Shrinker scale)\footnote{Here we define the shrinker scale following \cite{CS}, and it is different from the shrinker scale defined in \cite{CM15} and \cite{CM19}.} The shrinker scale $\R(M)$ of $M$ is defined by
	$$e^{-\frac{\R(M)^2}{4}}
	= \int_M |\varphi|^2 dx.$$
	
	\noindent (2) (Rough conical scale) For $l\in\bb N$ and $C_l>0$ (to be chosen) fixed, the rough conical scale $\rl(M)$of $M$  is the largest radius $\td r$  such that $M\cap B_{\td r}$ is smooth and
	$$|(\n^\perp)^k A_M(x)|\le \frac{C_l}{(1+r(x))^{1+k}}$$
	for all $k=0,\cdots,l-1$ and $x\in M\cap B_{\td r}.$
	
	\noindent (3) (Conical scale) For a fixed asymptotically conical self-shrinker $\Sigma^n$ and $\beta_0=\beta_0(\Sigma)$ given by the entire \L ojasiewicz inequality (Theorem \ref{entireL}), the conical scale $\rr_l(M)$ of $M$ is the largest radius $r\le\rl(M)$ such that there exists $U\in\CS 2$ with $\|U\|_{\CS2}\le \beta_0$ such that 
	$$\gr U|_{\Sigma\cap B_{r}}\sbst M \text{ and }
	M\cap B_{r-1}\sbst \gr U.$$

	\noindent (4) (Core graphical condition) We say $M$ satisfies the core graphical condition $(*_{b,\udl r})$ if $\rl(M)\ge r$ and there exists a $C^{l+1}$ vector field $U$ over $\Sigma\cap B_{\udl r}$ with $\|U\|_{C^{l+1}}\le b$ such that 
	$$\gr U\sbst M\text{ and }
	M\cap B_{\udl r-1} \sbst \gr U.$$

    \noindent (5) (Roughly conical approximate shrinker) For $R \ge \udl r$, we say that $M^{n}\sbst \bb R^{n+k}$ is a roughly conical approximate shrinker up to scale $R$ if $\Theta R\leq  \tilde{r_\ell}(M),$ $M$ satisfies the core graphical hypothesis $(*_{b, \underline r}),$ and
    $|\phi| + (1+|x|)|\nabla^\perp \phi| \leq s (1+|x|)^{-1}$ on $M\cap B_{\Theta R}$. 
\end{Def}

At this point, we are able to obtain a higher-codimensional version of the graphical estimate as in \cite[Proposition 7.2]{CS}.
The proof is similar, and as in Proposition \ref{Sc}, we need to take the normal projections of the corresponding vector fields.

\begin{Pro}\label{prop:approx}
Taking $\ell$ sufficiently large, there are constants $b,s>0$ sufficiently small, depending on $\Sigma$, $\beta_{0}$, $C_{\ell}$, and $\lambda_0$ such that if $M^{n}\sbst \bb R^{n+k}$ has $\lambda(M) \leq \lambda_0$ and is a roughly conical approximate shrinker up to scale $R$, then there exists $U: \Sigma \to\bb R^k$ with
$$
\gr U|_{\Sigma\cap B_{R}(0)} \sbst M \text{ and }  M\cap B_{R-1}(0)\sbst \gr U
$$
and $\Vert U \Vert_{\CS 2} \leq \beta_{0}$. 
That is to say, the conical scale satisfies $r_{\ell}(M) \geq R$.  
\end{Pro}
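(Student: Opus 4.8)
The plan is to follow the scheme of \cite[Proposition 7.2]{CS}, working throughout with sections of the normal bundle $N\Sigma$ and inserting the normal projection $\mathbf{proj}_{N\Sigma}$ wherever a scalar graph function would appear in the hypersurface argument. Fix $\ell$ large, to be pinned down by the elliptic estimates below, and suppose $M$ is a roughly conical approximate shrinker up to scale $R$ with $\lambda(M)\le\lambda_0$; thus $|(\np)^k A_M|\le C_\ell(1+r)^{-1-k}$ on $M\cap B_{\Theta R}$ for $k\le\ell-1$, the core graphical condition $(*_{b,\underline r})$ holds, and $|\varphi|+(1+|x|)|\np\varphi|\le s(1+|x|)^{-1}$ on $M\cap B_{\Theta R}$, where $\varphi=\h+\frac{x^\perp}{2}$. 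Since $\Theta>1$ we have $R<\tilde r_\ell(M)$, so the conclusion $r_\ell(M)\ge R$ makes sense. I would run a continuation argument in the radius: with $C$ the constant in the elliptic estimate below and $\epsilon_0:=\min\{\beta_0,(2C)^{-1}\}$, let $r^\star\in[\underline r,R]$ be the largest radius such that $M$ is the graph of a section $\tilde U$ of $N\Sigma$ over $\Sigma\cap B_{r^\star}$ with $\gr\tilde U|_{\Sigma\cap B_{r^\star}}\subset M$, $M\cap B_{r^\star-1}\subset\gr\tilde U$, and some entire extension of $\tilde U$ of $\CS 2$-norm $\le\epsilon_0$. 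The core graphical hypothesis gives $r^\star\ge\underline r$; the goal is $r^\star\ge R$, so suppose for contradiction $r^\star<R$.

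The first half of the continuation is geometric. Since $M$ has the same conical curvature decay as the cone $\mathcal C$ and as $\Sigma$ (Lemma \ref{sff}), and the entropy bound $\lambda(M)\le\lambda_0$ controls the relevant area ratios and blow-up limits, a radial continuation argument --- the drift of the tangent planes of $M$ relative to those of $\Sigma$ being governed by $r|A|\lesssim C_\ell$ in the logarithmic radial variable, while the conical structure confines the growth of the graph function to the indicial rates of the Jacobi operator of the cone, the slowest of which is linear --- shows that $M$ remains a $C^{\ell+1}$ graph $\tilde U$ over $\Sigma$ on a ball slightly larger than $B_{r^\star}$, as long as the $\CS 2$-norm stays $\le\epsilon_0$; composing with the parametrization of $\Sigma$ over $\mathcal C$ (Lemma \ref{graph}, Corollary \ref{sharp-metric}) and using the $b$-closeness on the core, one gets weighted bounds $|(\nabla_\Sigma)^{(j)}\tilde U|\lesssim(1+r)^{1-j+\eta}$ for $j\le\ell$. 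Thus $M$ stays graphical past $r^\star$, and the only way $r^\star$ fails to extend is that the norm constraint is saturated there, i.e.\ the extension of $\tilde U$ has $\CS 2$-norm exactly $\epsilon_0$.

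The second half is analytic. On the graphical region $\tilde U$ satisfies $\M(\tilde U)=J\cdot\mathbf{proj}_{N\Sigma}(\varphi)|_{\gr\tilde U}$ with $J$ a positive bounded weight, so the bound on $\varphi$ gives $\|\M(\tilde U)\|_{\CS 0}\lesssim s$; and by Lemma \ref{propM} together with the normal-bundle analogues of \eqref{tdg} and \eqref{tdA} (as in Lemma \ref{Mcon}), $\M(\tilde U)=L\tilde U+Q(\tilde U)$ with $Q$ quadratic and $\|Q(\tilde U)\|_{\CS 0}\lesssim\|\tilde U\|_{\CS 2}^2\le\epsilon_0^2$ by the weighted estimates of Section \ref{sec:1} (the cross-sectional, linear-in-$r$ mode of $\tilde U$ being treated as in \cite{CS}). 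I would then extend $\tilde U$ to an entire section $U\in\CS 2$ by splitting off, over an annulus inside $B_R$, its linear-in-$r$ part $\chi(r)(v(\omega))^\perp r$ --- the small angular discrepancy between the cross-section of $M$ and $\Gamma$ --- and cutting off the decaying remainder; since $L=\cL^\perp_{\frac12}+a$ with $a(V)=\sum_{k,\ell}\langle V,A_{k\ell}\rangle A_{k\ell}\in C^{0,\alpha}_{\textnormal{hom};-2}(\Sigma)$, and $L$ applied to the conical mode decays like $O(r^{-1})$, the extended $U$ solves $LU=\mathbf{proj}_{N\Sigma}\varphi-Q(\tilde U)$ up to an error supported in the cutoff annulus of size $O(R^{-1})$, negligible in $\|\cdot\|_{\CS 0}$ and $\|\cdot\|_{L^2_W}$. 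Applying Proposition \ref{C0toC0hom1} and then Proposition \ref{Sc} yields, for a fixed $R_0=R_0(\Sigma)\le\underline r$,
\[
\|U\|_{\CS 2}\le C\left(\|U\|_{C^0(\Sigma\cap B_{R_0}(0))}+\|LU\|_{\CS 0}\right)\le C(b+s)+C\epsilon_0^2.
\]
The passage from the a priori information on $B_{\Theta R}$ to an estimate on $B_R$ costs only a factor controlled by $\Theta$, which is why the hypothesis is imposed up to $\Theta R$ rather than $R$. Since $C\epsilon_0\le\frac12$, the last term is $\le\frac{\epsilon_0}{2}$, and for $b,s$ small enough (depending on $\Sigma,\beta_0,C_\ell,\lambda_0$) we get $\|U\|_{\CS 2}<\epsilon_0$, contradicting saturation at $r^\star$. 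Hence $r^\star\ge R$, and the associated $U$, with $\|U\|_{\CS 2}\le\epsilon_0\le\beta_0$, witnesses $r_\ell(M)\ge R$.

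The main obstacle is the interface between the two halves: one must extract, from purely geometric curvature hypotheses, a graph of $M$ over $\Sigma$ valid all the way out to scale $R$, and then repackage its graph function --- a priori only a $C^{\ell+1}$ object on a ball --- as an \emph{entire} $\CS 2$ section to which Propositions \ref{C0toC0hom1} and \ref{Sc} genuinely apply, keeping all constants uniform in $R$. The higher-codimensional content is concentrated here: the curvature terms in the defect equation now also involve the normal curvature $R^\perp$, handled via the Ricci equation and Lemma \ref{sff} as in Proposition \ref{DDV}, and the normal projections $\mathbf{proj}_{N\Sigma}$ enter exactly as in the proof of Proposition \ref{Sc}.
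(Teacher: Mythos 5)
Your overall architecture --- a continuation argument in the radius, graphicality propagation from the core using the curvature decay, an elliptic improvement in the cone spaces, and extension of the graph section by a cutoff --- is the same as in \cite[Proposition 7.2]{CS}, which is what the paper follows (the paper gives no details beyond "take normal projections"). However, the analytic half of your argument has a genuine gap at its central point: the smallness of the conical mode. The decisive quantity is $\sup_{\Sigma\cap B_R}|\tilde U|/r$, equivalently the cross-sectional mode $v(\omega)$ in the decomposition $\tilde U=\chi(r)(v(\omega))^\perp r+V$. This mode is precisely the indicial-root solution of the linearized operator at the conical end: $\cL^\perp_{\frac12}(v(\omega) r)=O(r^{-1})$ by the cancellation in $-\tfrac12\nabla^\perp_{x^T}+\tfrac12$, so no elliptic estimate of the form "$LU$ small $\Rightarrow$ $v$ small" can work --- your appeal to "indicial rates of the Jacobi operator confining the growth" identifies exactly the mode that elliptic theory \emph{cannot} control. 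The correct mechanism is pointwise and geometric: on $M\cap B_{\Theta R}$ one has $x^\perp=2\varphi-2\h$, hence $|x^\perp|\lesssim (C_\ell+s)(1+r)^{-1}$, which translates into $|r\partial_r(\tilde U/r)|\lesssim (C_\ell+s)r^{-3}$; integrating radially from the core (where $(*_{b,\udl r})$ gives $|\tilde U|\le b$) both \emph{defines} $v$ as the limit of $\tilde U/r$, shows $\|v\|_{C^0(\Gamma)}\lesssim b/\udl r+C_\ell/\udl r^{2}$, and yields the decay $|V|=|\tilde U-vr|\lesssim r^{-1}$. None of this appears in your proof; without it the decomposition you "split off" is not even well defined, and the smallness of $v$ is assumed rather than proved.

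Relatedly, the route through Propositions \ref{C0toC0hom1} and \ref{Sc} does not close as written. Proposition \ref{Sc} is a global estimate requiring $\cL^\perp_{\frac12}U+aU$ to be controlled in $\CS 0$ on \emph{all} of $\Sigma$, but your extended $U$ satisfies the graph equation only on $B_R$. Outside the graphical region $\cL^\perp_{\frac12}U+aU=\cL^\perp_{\frac12}(\chi v r)+\dots=O(r^{-1})\|v\|_{C^{2}(\Gamma)}$, which contributes $O(\|v\|_{C^{2}(\Gamma)})=O(\epsilon_0)$ to the weighted norm $\sup \tilde r|\cdot|$; likewise the commutator of $\cL^\perp_{\frac12}$ with the cutoff of $V$ contains the drift term $\tfrac12(x^T\cdot\nabla\chi_R)V$, which is $O(1)\cdot O(\epsilon_0 R^{-1})$ pointwise and hence $O(\epsilon_0)$ --- not $O(R^{-1})$ --- after weighting by $\tilde r\sim R$. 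Feeding an $O(\epsilon_0)$ source back into the estimate returns $\|U\|_{\CS 2}\le C\epsilon_0$ and the bootstrap never improves. The fix (and what \cite{CS} actually do) is to bound $\|v\|_{C^{2,\alpha}(\Gamma)}$ and the local anisotropic norm of $V$ on $B_{\Theta R}$ directly --- the $C^0$ layer from the radial integration above, the $C^{2,\alpha}$ layer from the \emph{interior} Schauder estimates of Proposition \ref{intSch} applied on balls inside $B_{\Theta R}$ (this is what the slack between $R$ and $\Theta R$ is for) --- and then observe that cutting off $V$ at scale $R$ multiplies these sup-type norms by at most a fixed constant, rather than treating the cutoff as a source error for a global equation.
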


Using Proposition \ref{prop:approx}, we can get the final version of the \L ojasiewicz-Simon inequality we would like. 
This can be done based on the same arguments in the proofs of Theorem 6.1 and Theorem 8.1 in \cite{CS}.

\begin{Thm}[Localized \L ojasiewicz-Simon inequality]\label{fLS}
Let $M\sbst \bb R^N$ with $\lambda(M)\le\lambda_0.$
If $\R(M)$ is sufficiently large with $\R(M)\le\rl(M)-1$ and $M$ satisfies the core graphical condition $(*_{b,\udl r}),$ then there exists $C=C(\Sigma, \lambda_0)$ such that
\begin{equation*}
|F(M) - F(\Sigma)|
\le C\pr{
	\int_M |\varphi|^2 \rho~ dx
}^{\frac{1}{2(1-\theta/3 )}}
\end{equation*}
where $\theta$ is the one given by Theorem \ref{entireL}.
\end{Thm}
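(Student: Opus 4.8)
The plan is to derive the localized inequality (Theorem \ref{fLS}) by combining the entire \L ojasiewicz-Simon inequality (Theorem \ref{entireL}) with the graphical extension result (Proposition \ref{prop:approx}), following the iteration/interpolation scheme of \cite[Sections 6 and 8]{CS}. The conceptual point is that the entire inequality only applies when the flow is globally a small graph over $\Sigma$, whereas here we only know a core graphical condition $(*_{b,\udl r})$ on a bounded region together with $\R(M)\le\rl(M)-1$; one must \emph{propagate} the graphicality outward to a scale comparable to the shrinker scale $\R(M)$ and pay for this with a small loss in the exponent, from $\theta$ down to $\theta/3$.

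First I would observe that the smallness of $\int_M|\varphi|^2\,\rho\,dx$ (equivalently, $\R(M)$ being large) forces $M$ to be a roughly conical approximate shrinker up to a scale $R$ that is a definite fraction of $\R(M)$: the hypothesis $\R(M)\le\rl(M)-1$ gives control of $A_M$ and its derivatives on $B_{\rl(M)}$, the core graphical condition supplies the $(*_{b,\udl r})$ hypothesis, and the decay $|\varphi|+(1+|x|)|\np\varphi|\le s(1+|x|)^{-1}$ on $B_{\Theta R}$ follows from the self-shrinker asymptotics of $\Sigma$ together with interior parabolic (White) estimates, provided $\Theta R$ stays below the rough conical scale. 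Concretely one chooses $R$ so that $\Theta R\le\rl(M)$ and $R$ is proportional to $\R(M)$; the factor $\Theta\in(1,(3/2)^{1/4}]$ is exactly the cushion built into Definition \ref{scale}(5) so that a single application of Proposition \ref{prop:approx} upgrades the graph from scale $\udl r$ to scale $R$. Applying Proposition \ref{prop:approx} then yields $U\in\CS2$ with $\|U\|_{\CS2}\le\beta_0$ and $\gr U|_{\Sigma\cap B_R}\sbst M$, $M\cap B_{R-1}\sbst\gr U$, i.e.\ $M$ is, up to scale $R\gtrsim\R(M)$, the graph of an admissible vector field.

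Next I would invoke Theorem \ref{entireL} applied not to $M$ itself but to the entire graph $\widetilde M:=\gr U$ of the admissible extension: this gives
\[
|F(\widetilde M)-F(\Sigma)|^{1-\theta}\le C\|\M(U)\|_{L^2_W(\Sigma)}
=C\pr{\int_{\widetilde M}|\varphi_{\widetilde M}|^2\rho\,dx}^{1/2}.
\]
The remaining work is to compare $\widetilde M$ with $M$. Since $M$ and $\widetilde M$ agree on $B_{R-1}$ and $R\gtrsim\R(M)$ is large, the Gaussian weight $\rho$ makes the contributions from $\bb R^N\setminus B_{R-1}$ superexponentially small: one bounds $|F(M)-F(\widetilde M)|$ and $\big|\int_M|\varphi|^2\rho - \int_{\widetilde M}|\varphi_{\widetilde M}|^2\rho\big|$ by $Ce^{-cR^2}\le Ce^{-c'\R(M)^2}$ using the curvature bounds from the rough conical scale and the entropy bound $\lambda(M)\le\lambda_0$ to control areas. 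Then $e^{-\R(M)^2/4}=\int_M|\varphi|^2dx$ (the definition of $\R(M)$) lets one reabsorb these tail errors: one gets $\int_{\widetilde M}|\varphi_{\widetilde M}|^2\rho\le 2\int_M|\varphi|^2\rho$ and $|F(M)-F(\Sigma)|\le|F(\widetilde M)-F(\Sigma)|+Ce^{-c'\R(M)^2}$. Feeding these back and using that $|F(M)-F(\Sigma)|$ is itself small (hence $|F(M)-F(\Sigma)|\le|F(M)-F(\Sigma)|^{(1-\theta)/(1-\theta/3)}$ after the tail term is absorbed, because the tail is dominated by any fixed positive power of $\int_M|\varphi|^2\rho$) yields
\[
|F(M)-F(\Sigma)|\le C\pr{\int_M|\varphi|^2\rho\,dx}^{\frac{1}{2(1-\theta/3)}},
\]
which is the claim. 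The bookkeeping that converts exponent $1-\theta$ into $1-\theta/3$ is precisely the standard ``three scales'' argument of \cite[Section 8]{CS}: two of the three $\theta$-thirds are spent, respectively, on passing from the graphical scale $R$ back to the shrinker scale and on absorbing the superexponential tails.

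The main obstacle is the extension step, Proposition \ref{prop:approx}, and more precisely verifying that its hypotheses genuinely hold here in the higher-codimensional setting. In codimension one Chodosh-Schulze propagate the graphicality using a barrier/maximum-principle curvature estimate; as the introduction emphasizes, that argument is unavailable for submanifolds of codimension $\ge2$. The substitute is the curvature estimate coming from asymptotic conicality together with White's local regularity theorem (cf.\ the forthcoming Proposition \ref{exp}), which is what lets one conclude that $\rl(M)$, and hence the region on which $|\phi|+(1+|x|)|\np\phi|$ is small, extends all the way out to $\Theta R\gtrsim\R(M)$. Once that input is in place, everything else — the application of Theorem \ref{entireL} to the entire extension, the Gaussian-tail comparison between $M$ and $\widetilde M$, and the exponent arithmetic — is a routine adaptation of \cite{CS}, with only the cosmetic change that all vector fields and all relevant operators ($\np$, $\D^\perp$, $L$) are the normal-bundle versions introduced in Sections \ref{sec:1} and \ref{sec:2}.
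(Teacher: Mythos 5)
Your proposal is correct and follows essentially the same route as the paper, which itself simply defers to Proposition \ref{prop:approx} together with the arguments of Theorems 6.1 and 8.1 of Chodosh--Schulze: upgrade the core graphical condition to graphicality at a scale comparable to $\R(M)$, apply the entire inequality (Theorem \ref{entireL}) to the resulting global graph, and absorb the Gaussian tails using $e^{-\R(M)^2/4}=\int_M|\varphi|^2\,dx$, paying for this with the degradation of the exponent from $\theta$ to $\theta/3$. The only minor imprecision is that the pointwise smallness of $\varphi$ on $B_{\Theta R}$ needed in Definition \ref{scale}(5) comes from the weighted $L^2$ smallness of $\varphi$ (i.e.\ $\R(M)$ large) upgraded by interior elliptic estimates under the rough conical curvature bounds, rather than from the asymptotics of $\Sigma$ itself; this does not affect the validity of the argument.
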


\section{\bf Uniqueness for asymptotically conical self-shrinkers}\label{sec:4}

Now we are in a position to prove Theorem \ref{prop:uniqueness}.
Fix some sufficiently large $\udl r$ and sufficiently small $\varepsilon$ (which will be chosen in the proof of Proposition \ref{exp}). 
Assume a tangent flow of a MCF $\td M_t$ is modelled on $\Sigma.$
That means that after rescaling, we can obtain a rescaled MCF $M_\tau$ ($\tau\in[-2,\infty)$) 
so that there is a map 
$$U\colon (\Sigma\cap B_{\varepsilon^{-1}}) \times [-2,\varepsilon^{-2}] \to \bb R^k 
$$
such that for all $\tau\in [-2, \varepsilon^{-2}],$ we have
$$\gr U(\cdot,\tau)\sbst M_\tau,
M_\tau\cap B_{\varepsilon^{-1}-1}\sbst \gr U(\cdot,\tau),$$
$\|U\|_{C^{l+1}(\Sigma\cap B_{\varepsilon^{-1}})}\le \varepsilon,$ and
$F(M_\tau)-F(\Sigma)\le\varepsilon.$
As in \cite{CS}, we define the graphical time to be
$$\ovl\tau
:= \sup \{t\in[-2,\infty): M_\tau\text{ satisfies }(*_{b,\udl r})\text{ for all }\tau\in[-2,t]\}.
$$
We are going to establish the following extension result.

\begin{Pro}\label{exp}
	There exists $\udl r_0=\udl r_0(\Sigma,M_{-2})$ sufficiently large such that for $\udl r\ge \udl r_0,$ $\varepsilon=\varepsilon(\Sigma,\udl r)$ sufficiently small and $C_l=C_l(\Sigma,\udl r)$ sufficiently large, we have
	\begin{equation}\label{expimprove}
	\rl\ge \frac 12 e^{\frac \tau 2}\udl r
	\end{equation}
	for all $\tau\in [0,\ovl\tau)$ such that $\R(M_\tau)>\rl(M_\tau).$
\end{Pro}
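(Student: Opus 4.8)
The plan is to follow the strategy of \cite[Section 7]{CS}, adapting it to the higher-codimensional setting, where the principal new input is the curvature estimate obtained by combining asymptotic conicality with White's local regularity theorem in place of the barrier argument used in the hypersurface case. We work at a fixed time $\tau\in[0,\ovl\tau)$ with $\R(M_\tau)>\rl(M_\tau)$, so that by definition of the rough conical scale the bound $|(\np)^k A_{M_\tau}|\le C_l(1+r)^{-1-k}$ must fail (or smoothness must fail) at some point at radius $\rl=\rl(M_\tau)$; the goal is to rule this out below scale $\tfrac12 e^{\tau/2}\udl r$. Suppose for contradiction that $\rl < \tfrac12 e^{\tau/2}\udl r$. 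On the region $M_\tau\cap B_{\rl}$ the curvature bounds do hold by definition, so $M_\tau$ is, on this region, a roughly conical approximate shrinker up to a scale comparable to $\rl$ once we check the hypothesis $|\phi|+(1+|x|)|\np^\perp\phi|\le s(1+|x|)^{-1}$; this last bound comes from the graphical hypothesis $(*_{b,\udl r})$ together with parabolic interior estimates, since $\phi=\h+\tfrac{x^\perp}{2}$ vanishes identically in the limit and the flow stays $C^{l+1}$-close to $\Sigma$ on the relevant compact core, propagated outward by the smooth convergence along the tangent flow.

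The main body of the argument is a blowup/compactness contradiction. Assuming the conclusion fails, one produces a sequence of rescaled flows (or times $\tau_i$) along which $\rl(M_{\tau_i})$ stays definitely below $\tfrac12 e^{\tau_i/2}\udl r$ while the core graphical condition persists; after translating and rescaling so that the violating point sits at unit distance, Brakke compactness and the entropy bound $\lambda(M)\le\lambda_0$ give subsequential convergence to a limit flow which, by Huisken monotonicity and the vanishing of $\phi$ in the limit forced by $F(M_\tau)-F(\Sigma)\to 0$, must be a static self-shrinker with a curvature concentration at scale one. The key step is then to invoke White's local curvature estimate \cite{W94} (or its self-shrinker/Brakke-flow analogue): the entropy bound plus unit-density forces a uniform curvature bound on the limit at the violating point, contradicting the assumed concentration, \emph{provided} we know the limit is smooth and multiplicity one there. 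Smoothness and unit multiplicity on the relevant ball follow from the asymptotic conicality of $\Sigma$ (its ends are embedded with small curvature, so the graphical core controls the flow) combined with Proposition \ref{prop:approx}, which upgrades the rough conical control to genuine graphicality over $\Sigma\cap B_R$ with $\|U\|_{\CS2}\le\beta_0$ as soon as $\Theta R\le\rl$; this is exactly the step where codimension-one-specific maximum principles are replaced, and where Proposition \ref{exp}'s hypotheses on $\udl r$ large, $\varepsilon$ small, $C_l$ large are spent.

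Concretely, the steps in order are: (i) translate the statement into: if $(*_{b,\udl r})$ holds on $[-2,\tau]$ and $\R(M_\tau)>\rl(M_\tau)$, then $\rl(M_\tau)\ge\tfrac12 e^{\tau/2}\udl r$; (ii) use $(*_{b,\udl r})$ and parabolic Schauder estimates (Proposition \ref{intSch}) to propagate $C^{l+1}$-closeness of $M_\tau$ to $\Sigma$ from the core $B_{\udl r}$ outward as far as the rough conical scale allows, verifying $M_\tau$ is a roughly conical approximate shrinker up to scale $\sim\rl$; (iii) apply Proposition \ref{prop:approx} to conclude $M_\tau\cap B_{\rl-1}$ is the graph of some $U$ with $\|U\|_{\CS2}\le\beta_0$, in particular smooth and multiplicity one there; (iv) use the asymptotically conical geometry (Lemmas \ref{sff}, \ref{graph}, Corollary \ref{sharp-metric}, Lemma \ref{normal}) to turn $\|U\|_{\CS2}\le\beta_0$ into the improved pointwise curvature bound $|(\np)^kA_{M_\tau}|\le \tfrac{C_l}{2}(1+r)^{-1-k}$ on, say, $B_{\rl/2}\setminus B_{\udl r}$ — strictly better than the defining bound of $\rl$ — using White's curvature estimate to handle the scale-invariant region where the graphical smallness alone is not quantitatively enough (cf. Proposition \ref{exp}'s reference to White); (v) conclude by an open-closed / maximality argument that $\rl$ cannot have been the largest such radius unless it already exceeds $\tfrac12 e^{\tau/2}\udl r$, i.e. the estimate $\rl\ge\tfrac12 e^{\tau/2}\udl r$ holds. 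The factor $\tfrac12 e^{\tau/2}$ tracks the parabolic rescaling relating $M_\tau$ to the original unrescaled flow: the core graphical scale $\udl r$ at the rescaled time $\tau$ corresponds to a ball of radius $\sim e^{\tau/2}\udl r$ in the earlier (larger) flow, and one loses a universal constant in transferring the curvature bound across the rescaling.

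I expect the main obstacle to be step (iv): obtaining the \emph{strict improvement} of the curvature bound in the scale-invariant outer annulus, where the smallness of $\|U\|_{\CS2}$ is only borderline. In the hypersurface case \cite{CS} this is where a barrier/maximum-principle argument enters; here one must instead combine the conical asymptotics of $\Sigma$ (which already gives $|(\np)^kA|=O(r^{-1-k})$ with a \emph{fixed} constant depending only on $\Sigma$, not on $C_l$) with White's $\varepsilon$-regularity to absorb the error contributed by the graph $U$, choosing $C_l$ large enough (depending on $\Sigma$ and $\udl r$) that $\Sigma$'s own decay constant is at most $C_l/4$ and $\beta_0$-smallness of $U$ contributes at most another $C_l/4$. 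Making this quantitative and uniform — in particular ensuring the $\varepsilon$-regularity hypotheses (small local Gaussian density ratio, which follows from $F(M_\tau)-F(\Sigma)\le\varepsilon$ and Huisken monotonicity) are genuinely met on every ball in the annulus — is the delicate point, and is exactly the place where the higher-codimension argument diverges from \cite{CS}.
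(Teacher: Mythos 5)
Your proposal assembles the right ingredients (White's Brakke regularity, the core graphical condition, the conical decay of $\Sigma$, Shi-type derivative estimates), but the architecture is not the one that makes the proposition work, and as written it has a gap I do not think can be closed. The conclusion $\rl(M_\tau)\ge\frac 12 e^{\tau/2}\udl r$ asserts curvature decay out to an \emph{exponentially growing} radius, whereas at the fixed time $\tau$ the only a priori input is the core graphical condition on the fixed ball $B_{\udl r}$. An improvement/open--closed argument carried out on the single time slice $\tau$ (your steps (iii)--(v)) has no mechanism to push the scale past $O(\udl r)$: under your contradiction hypothesis $\rl<\frac12 e^{\tau/2}\udl r$ you only control the flow on $B_{\rl}$, and improving the constant there says nothing about the radii between $\rl$ and $e^{\tau/2}\udl r$. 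The exponential factor can only come from the parabolic history. The paper's proof exploits exactly this: for each $x\in M_\tau$ with $\udl r\le |x|\le e^{\tau/2}(\udl r-R_0-1)$ it chooses an \emph{earlier} time $\tau_0=\tau+2\log\frac{\udl r-R_0-1}{|x|}\in[0,\tau)$ and passes to the ordinary MCF $\hat M_t=\sqrt{-t}\,M_{\tau_0-\log(-t)}$, so that $x$ is rescaled to a point of $\partial B_{\udl r-R_0-1}$. At that fixed radius the Gaussian density ratio is shown to be $\le 1+\varepsilon$ by splitting the integral centered at $x_0$ into a far part (entropy bound plus \cite[Lemma 2.4]{CIM}) and a near part (the core graphical condition over the conical end of $\Sigma$, which makes the curvature small on the annulus $\ovl B_{\udl r}\setminus B_{\udl r/2}$); then White's theorem and the Shi-type estimate of Andrews--Baker give $|\n^kA|\le C$ there, and undoing the rescaling yields $|x|^{1+k}|\n^kA_{M_\tau}(x)|\le C(\udl r-R_0+1)^{1+k}$, i.e.\ the defining bound of $\rl$ once $C_l$ is chosen large. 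The constraint $\tau_0\ge 0$ is precisely what limits the reachable radius to $e^{\tau/2}(\udl r-R_0-1)$; no compactness or contradiction argument is needed.

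Two further specific problems. First, Proposition \ref{prop:approx} is not available here in the way you use it: it takes a large rough conical scale as a \emph{hypothesis} ($\Theta R\le\rl(M)$) and outputs graphicality with $\|U\|_{\CS 2}\le\beta_0$, and it is consumed downstream in the proof of the localized \L ojasiewicz inequality; Proposition \ref{exp} is the statement that supplies the large rough conical scale in the first place, so invoking \ref{prop:approx} to improve $\rl$ is circular. Relatedly, $\beta_0$ is a fixed constant from Theorem \ref{entireL}, not a small parameter, so $\|U\|_{\CS 2}\le\beta_0$ cannot by itself yield the quantitative improvement $|(\np)^kA|\le\frac{C_l}{2}(1+r)^{-1-k}$ for all $k\le l-1$. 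Second, your verification of the hypothesis of White's theorem via $F(M_\tau)-F(\Sigma)\le\varepsilon$ and Huisken monotonicity controls the Gaussian density ratio centered at the origin only; the estimate is needed at centers $x_0$ in the outer annulus, and it is the split of the Gaussian integral centered at $x_0$ described above that supplies it.
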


In the proof of proposition \ref{exp}, the key ingredient is White's Brakke estimate.\footnote{The version we apply here is the one used in \cite{CM15}.}

\begin{Thm}[\cite{W05}]\label{White}
	For $\varepsilon>0$ small enough, there exists a constant $C(\varepsilon)>0$ such that the following holds. If $\hat M_t^k\sbst\bb R^{n+k}$ ($t< 0$) flow by the MCF with $\lambda(\hat M_s)\le\lambda_0,$ and 
	$$(-4\pi t_0)^{-\frac n2} \int_{\hat M_{t_0}} e^{\frac{|x-x_0|^2}{4t_0}} \le \varepsilon
	$$
	for some $x_0\in\bb R^{n+k}$ and $t_0<0,$ then
	$$\sup_{\hat M_t\cap B_{\sqrt{-t_0}/2}(x_0)} |A|^2
	\le \frac{C(\varepsilon)}{-t_0}$$
	for all $t\in[\frac {t_0}4,0).$ Also, the constant $C(\varepsilon)\to 0$ as $\varepsilon\to 0.$	
\end{Thm}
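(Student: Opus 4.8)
The plan is to reproduce White's proof \cite{W05} (we use the formulation recorded in \cite{CM15}), whose engine is Huisken's monotonicity formula \cite{H90} combined with a blow-up/compactness argument; the external inputs beyond these are Brakke's compactness theorem \cite{B78} and the interior estimates for smooth mean curvature flow. First I would reduce to a normalized situation: the parabolic rescaling $\hat M_t \mapsto (-t_0)^{-1/2}\bigl(\hat M_{(-t_0)t} - x_0\bigr)$ preserves the entropy bound $\lambda \le \lambda_0$, so we may assume $x_0 = 0$ and $t_0 = -1$; the hypothesis then reads $(4\pi)^{-n/2}\int_{\hat M_{-1}} e^{-|x|^2/4}\, d\mathcal H^n \le \varepsilon$, and the conclusion becomes $\sup\{\,|A|^2(x,t) : x \in \hat M_t \cap B_{1/2}(0),\ t \in [-1/4,0)\,\} \le C(\varepsilon)$, with $C(\varepsilon) \to 0$ as $\varepsilon \to 0$.

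The next step is to propagate the smallness of the Gaussian integral. Writing the Gaussian density ratio $\Theta_{(y,s)}(\hat M, r) := (4\pi r^2)^{-n/2}\int_{\hat M_{s-r^2}} e^{-|x-y|^2/(4r^2)}\, d\mathcal H^n$, Huisken's monotonicity formula gives that $\Theta_{(y,s)}(\hat M, r)$ is nondecreasing in $r$, so that $\Theta_{(0,0)}(\hat M, r) \le \Theta_{(0,0)}(\hat M, 1) \le \varepsilon$ for $r \le 1$; comparing Gaussians with shifted centers (at a fixed scale of order one the kernels $e^{-|x-y|^2/(4r^2)}$ and $e^{-|x|^2/(4r^2)}$ differ by a bounded factor for $|y| \le 1/2$) then yields $\Theta_{(y,s)}(\hat M, r) \le C(n)\varepsilon$ for all $y \in B_{1/2}(0)$, $s \in [-1/4,0]$ and $r \le 1$. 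In particular every such spacetime point has Gaussian density $< 1$ once $\varepsilon$ is small.

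Then I would argue by contradiction via a blow-up. If the statement failed there would be flows $\hat M^i_t$ with $\lambda(\hat M^i_s) \le \lambda_0$ and Gaussian integrals $\varepsilon_i \le \varepsilon$ (taking $\varepsilon_i \to 0$ for the assertion $C(\varepsilon) \to 0$), but with $Q_i := \sup\{\,|A|^2(x,t) : x \in \hat M^i_t \cap B_{1/2},\ t \in [-1/4,0)\,\}$ bounded below (resp.\ $Q_i \to \infty$ for fixed small $\varepsilon$). By a parabolic point-selection argument one picks $(p_i, s_i)$ essentially realizing $Q_i$ weighted by parabolic distance to the boundary of the region, and parabolically rescales by $\mu_i := |A|(p_i, s_i)$, so that on the rescaled flows $\tilde M^i_t$ one has $|A| \le 2$ on parabolic balls of radius $\to \infty$ and $|A|(0,0) = 1$. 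Brakke's compactness theorem — the area bounds coming from $\lambda \le \lambda_0$ — together with the interior curvature and higher-derivative estimates lets a subsequence of $\tilde M^i_t$ converge smoothly on compact sets to a smooth, nonflat limit mean curvature flow $\tilde M^\infty_t$ with $|A| \le 2$ and $|A|(0,0) = 1$. By the scaling and translation invariance of the Gaussian density and monotonicity, every density ratio of $\tilde M^\infty$ at a bounded center and scale is a limit of density ratios of the $\hat M^i$ at bounded centers and small scales, hence is $\le C(n)\lim_i \varepsilon_i < 1$; but the spacetime origin lies in the support of $\tilde M^\infty$ (since $|A|(0,0) = 1$), and a nonempty smooth flow has Gaussian density $\ge 1$ at each support point — a contradiction. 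This simultaneously yields $C(\varepsilon) < \infty$ for fixed small $\varepsilon$ and $C(\varepsilon) \to 0$ as $\varepsilon \to 0$.

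The main obstacle is the interplay between the monotonicity bookkeeping and the regularity upgrade: one must check that the single bound $\Theta_{(0,0)}(\hat M, 1) \le \varepsilon$ controls, uniformly in $i$, all the density ratios at the bounded centers and small scales that the point-selection and rescaling produce, and then that the Brakke (measure-theoretic) subsequential limit is genuinely a smooth flow with the claimed curvature bound — this last point is exactly where $\varepsilon < 1$ is used, since it rules out higher-multiplicity sheets in the limit.
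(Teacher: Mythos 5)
A preliminary remark: the paper does not prove Theorem \ref{White} at all --- it is quoted from White \cite{W05}, in the formulation used in \cite{CM15} --- so there is no internal proof to compare against; your submission is an attempt to re-prove White's local regularity theorem. Your skeleton (Huisken monotonicity, parabolic point selection, curvature rescaling, compactness to a smooth limit, and the fact that Gaussian density is at least $1$ on the support, with the equality case of monotonicity forcing a static multiplicity-one plane) is indeed the standard architecture of that proof.

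There are, however, two concrete gaps. First, the kernel-comparison step is false as stated: for Gaussians at the same scale $r$ with centers differing by $y$, the ratio is $e^{(|x|^{2}-|x-y|^{2})/(4r^{2})}=e^{(2x\cdot y-|y|^{2})/(4r^{2})}$, which is unbounded as $|x|\to\infty$, so the kernels do \emph{not} ``differ by a bounded factor for $|y|\le 1/2$.'' To propagate the density bound to nearby centers $(y,s)$ with $s\in[t_0/4,0)$ you must split the time-$t_0$ slice into a ball of radius $R\sqrt{-t_0}$ (where the exponent defect is controlled and the prefactor only costs $(4/3)^{n/2}$) and its complement, whose contribution is controlled by the entropy bound $\lambda\le\lambda_0$ via polynomial area growth --- precisely \cite[Lemma 2.4]{CIM}, which the paper itself invokes in the proof of Proposition \ref{exp}. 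Second, and more seriously, what you prove is the literal statement with Gaussian ratio $\le\varepsilon$ small; under that hypothesis every nearby spacetime point has density $<1$, hence does not lie on the flow, and the curvature bound is essentially vacuous. The version the paper actually needs and applies (see \eqref{entropy4} in the proof of Proposition \ref{exp}) has the bound $1+\varepsilon$, and there your final contradiction collapses: from a bound $1+\varepsilon$ at a single center and scale, no kernel comparison yields $1+o(1)$ at shifted centers --- only $C(n,\lambda_0)(1+\varepsilon)$ --- so you cannot conclude that the blow-up limit has all densities near $1$ and is a multiplicity-one plane. White's theorem is stated, and is used in the paper (note the ``for any $x_0$'' running through \eqref{entropyout}--\eqref{entropyin}), with the density hypothesis imposed at all nearby centers; your argument should either assume that form or carry the all-centers hypothesis through the contradiction, identifying the limit as a static plane by the rigidity case of monotonicity and contradicting $|A|(0,0)=1$ via the smooth local convergence that your curvature bounds on expanding parabolic balls plus interior estimates do provide.
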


\begin{proof}[Proof of Proposition \ref{exp}]
	Let $\tau_0\in[0,\ovl\tau).$ Consider the flow
	$\hat M_t:=\sqrt{-t} M_{\tau_0-\log(-t)},$
	which is an ordinary MCF (depending on $\tau_0$). Then $\hat M_{-1}=M_{\tau_0}.$ For $\delta_0>0,$ we take $\varepsilon=\varepsilon(\delta_0)>0$ so small that the corresponding constant $C(\varepsilon)$ in theorem \ref{White} satisfies $C(\varepsilon)\le 4 \delta_0.$ Based on \cite[Lemma 2.4]{CIM} and the entropy bound $\lambda_0,$ we can take $R_0=R_0(\delta_0)$ large enough such that
	\begin{equation}\label{entropyout}
	(16\pi)^{-\frac n2} \int_{\hat M_{-4} \setminus B_{R_0}(x_0)} e^{-\frac{|x-x_0|^2}{16}}\le \frac\varepsilon 2
	\end{equation}
	for any $x_0\in\bb R^{n+k}.$
	Next, by taking $\udl r$ large and $b$ small, using Lemma \ref{sff} and $(*_{b,\udl r}),$ we have
	$$|A_{\hat M_{-4}}|^2\le \sigma_0
	\text{ on } \hat M_{-4} \cap (\ovl B_{\udl r}\setminus B_{\udl r/2})$$
	for any given $\sigma_0>0.$
    We may also assume $\udl r\ge 4R_0.$ 
    By definition, if $\sigma_0$ is sufficiently small, we have
    \begin{equation}\label{entropyin}
	\lambda(\hat M_{-4} \cap B_{R_0}(x_0))\le 1 + \frac\varepsilon 2
	\end{equation}
	for $B_{R_0}(x_0)\sbst \ovl B_{\udl r}\setminus B_{\udl r/2}.$
	Combining \eqref{entropyout} and \eqref{entropyin}, we obtain
	\begin{equation}\label{entropy4}
	(16\pi)^{-\frac n2} \int_{\hat M_{-4}} e^{-\frac{|x-x_0|^2}{16}}\le 1+\varepsilon.
	\end{equation}
	Therefore, Theorem \ref{White} implies
	\begin{equation*}
	\sup_{\hat M_t\cap B_{1}(x_0)} |A|^2
	\le \frac{C(\varepsilon)}{4}\le \delta_0
	\end{equation*}
	for $t\in[-1,0]$ and $x_0\in \ovl B_{\udl r-R_0}\setminus B_{\udl r/2+R_0}.$ 
    As a result, we can take a universal constant $\delta_0 = \delta_0(n)$ (such that, for example, $n\delta_0\le\frac 14$) and assume $\hat M_{-1} \cap (\ovl B_{\udl r-R_0-1}\setminus B_{\udl r/2+R_0+1})$ stays in $\ovl B_{\udl r-R_0-1}\setminus B_{\udl r/2+R_0+1}$ under the MCF in the time interval $[-1,0).$ 
	Thus, by applying the Shi-type estimate \cite[Lemma 3]{AB}, we derive
	\begin{equation}\label{|A|bdd}
	|\n^k A_{\hat M_t}|\le C(M_{-2},\lambda_0,w)
	\text{ on }\hat M_{t} \cap \ovl B_{\udl r-R_0-1} 
	\end{equation}
	for $t\in[-1+w,0)$ and $k=0,\cdots, l-1.$
	
	Now we follow the ideas in \cite{CS} to prove the proposition. 
    We may assume \eqref{expimprove} is true for $\tau\in[-2,1]$ by taking $\varepsilon$ small enough. 
    On the other hand, given $\tau\in [1,\ovl \tau)$ and 
	$$x\in M_\tau\cap \left(\ovl B_{e^{\frac \tau 2}(\udl r-R_0-1)} \setminus B_{\udl r} \right),$$
	consider
	$$\tau_0:= \tau + 2\log\frac{\udl r-R_0-1}{|x|}\in [0,\tau).$$
	Then,
	$$t := -e^{\tau_0-\tau} = -\frac{(\udl r-R_0-1)^2}{|x|^2} \in [-1+w,0)$$
	where
	$w:=1-\left(1-\frac{R_0-1}{\udl r}\right)^2\in (0,1).
	$
	After rescaling, we get
	$$\hat x:=\sqrt{-t}x\in \hat M_{t}\cap \bd B_{\udl r-R_0-1}.$$
	Consequently, the curvature estimate \eqref{|A|bdd} implies
	$$|\hat x|^{-1-k} |\hat x|^{1+k} |\n^k A_{\hat M_t}(x)| 
	= |\n^k A_{\hat M_t}(x)| \le C=C(M_{-2},\lambda_0,w).$$
	Rescaling back, we have
	$$|x|^{1+k}|\n^k A_{M_\tau}(x)|\le C(\udl r-R_0+1)^{1+k}.$$
	Finally, choosing $C_l=(M_{-2},\lambda_0,w)$ sufficiently large in (2) of definition \ref{scale}, we have
	$$\rl(M_\tau) 
	\ge e^{\frac \tau 2}(\udl r-R_0-1)\ge \frac 12 e^{\frac \tau 2}
	$$
	by our choices of $R_0.$	
\end{proof}

With the \L ojasiewicz-Simon inequality (Theorem \ref{fLS}) and the extension result (Proposition \ref{exp}), we are able to prove the higher-codimensional version of the theorem proven by Chodosh-Schulze.

\begin{proof}[Proof of Theorem \ref{prop:uniqueness}]
    After establishing Theorem \ref{fLS} and Proposition \ref{exp}, we can follow the arguments in \cite[Section 9]{CS}, which are standard after having a \L ojasiewicz-type inequality, verbatim to get the uniqueness of asymptotically conical tangent flows.
\end{proof}

\end{document}